\newtheorem{Prop}{Proposition}
\newtheorem{Lem}[Prop]{Lemma}
\newtheorem{Thm}[Prop]{Theorem}
\newtheorem{Cor}[Prop]{Corollary}
\newcommand{\R}{\mathbb{R}}
\newcommand{\C}{\mathbb{C}}
\renewcommand{\H}{\mathbb{H}}
\newcommand{\Z}{\mathbb{Z}}
\newcommand{\E}{\mathcal{E}}
\newcommand{\tr}{\mathrm{tr\,}}
\newcommand{\<}{\langle}
\renewcommand{\>}{\rangle}
\renewcommand{\d}{\partial}
\newcommand{\Isom}{\mathrm{Isom}}
\date{}
\title{Deformations of hyperbolic convex polyhedra and cone-$3$-manifolds}
\author{Gr\'egoire Montcouquiol\footnote{Partially supported by the ANR program GeomEinstein 06-BLAN-0154. \newline Univ Paris-Sud, Laboratoire de Math\'ematiques, UMR 8628, Orsay, F-91405;
CNRS, Orsay, F-91405.}}
\begin{document}

\maketitle

\begin{minipage}{13.8cm}
\small

\noindent{\bf Abstract}

\vskip 0.5\baselineskip
\noindent The Stoker problem, first formulated in \cite{Stoker}, consists in understanding to what extent a convex polyhedron is determined by its dihedral angles. By means of the double construction, this problem is intimately related to rigidity issues for $3$-dimensional cone-manifolds. In \cite{Maz-GM}, two such rigidity results were proven, implying that the infinitesimal version of the Stoker conjecture is true in the hyperbolic and Euclidean cases. In this second article, we show that local rigidity holds and prove that the space of convex hyperbolic polyhedra with given combinatorial type is locally parametrized by the set of dihedral angles, together with a similar statement for hyperbolic cone-$3$-manifolds. 

\vskip 0.5\baselineskip
\noindent{\bf R\'esum\'e}

\vskip 0.5\baselineskip
\noindent Le problème de Stoker, formulé pour la première fois dans \cite{Stoker}, consiste à comprendre dans quelle mesure un polyèdre convexe est déterminé par ses angles dièdres. Via la construction qui à un polyèdre associe son double, ce problème est intimement lié à des questions de rigidité pour les cônes-variétés de dimension $3$. Dans \cite{Maz-GM}, deux tels résultats de rigidité ont été prouvés, impliquant que la version infinitésimale de la conjecture de Stoker est vraie dans les cas euclidien et hyperbolique. Dans ce deuxième article, nous passons de l'infinitésimal au local, et montrons que l'espace des polyèdres hyperboliques convexes de combinatoire fixée est localement paramétré par la donnée des angles dièdres, ainsi qu'un résultat similaire pour les cônes-variétés hyperboliques.

\end{minipage}

\normalsize

\section{Introduction}

In his 1968 article \cite{Stoker}, Stoker asks the following question: if $\mathcal{P}$ is a convex polyhedron, then is it true that the internal angles of its faces are determined by the dihedral angles of its edges? This conjecture, originally intended for Euclidean polyhedra, has been readily extended to convex polyhedra in the $3$-sphere or the $3$-dimensional hyperbolic space. In both latter cases, the question becomes whether a spherical or hyperbolic convex polyhedron is determined by its combinatorial type and its dihedral angles. 
In a first article \cite{Maz-GM}, an infinitesimal version of the Stoker problem was proven in the Euclidean and hyperbolic case. It states that there is no nontrivial deformation of a convex hyperbolic polyhedron for which the infinitesimal variation of all dihedral angles vanishes; for a convex polyhedron in Euclidean space, such first-order deformations exist but preserve the internal angles of the faces.

This theorem is actually formulated in the more general setting of cone-manifolds. A hyperbolic or Euclidean cone-$3$-manifold is a constant curvature stratified space, which can be locally described as a gluing of (hyperbolic or Euclidean) tetrahedra. The metric is smooth everywhere except on the singular locus, consisting of glued edges and vertices. Near a singular edge, the metric looks asymptotically like the product of an interval with a $2$-dimensional cone, allowing to define the cone angle of this edge (a more precise definition is given in Section \ref{subsec:hypconeman}). The relationship with polyhedra is straightforward : given a polyhedron $\mathcal{P}$, one can construct its double, which has a natural cone-$3$-manifold structure. If $\mathcal{P}$ is convex, then the cone angles of its double are smaller than  $2\pi$; this restriction will always be present in this article. 

\begin{Thm}[The Infinitesimal Stoker Conjecture for Cone-manifolds, \cite{Maz-GM}]\label{thm:mazgm}\mbox{}\\
Let $\bar M$ be a closed, orientable three-dimensional cone-manifold with all cone angles smaller than $2\pi$. If $\bar M$ is hyperbolic, then 
$\bar M$ is infinitesimally rigid relative to its cone angles, i.e.\ every angle-preserving infinitesimal deformation is trivial.
If $\bar M$ is Euclidean, then every angle-preserving deformation also preserves the spherical links of the codimension $3$ singular points of $\bar M$.

In particular, convex hyperbolic polyhedra are infinitesimally rigid relatively to their dihedral angles, while every 
dihedral angle preserving infinitesimal deformation of a convex Euclidean polyhedron also preserves the internal angles 
of the faces.
\end{Thm}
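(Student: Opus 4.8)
\ The strategy is a Bochner-type vanishing argument, extending the one Hodgson and Kerckhoff developed for hyperbolic cone-manifolds whose singular locus is a disjoint union of closed geodesics, so as to accommodate a genuine singular graph: the codimension-$3$ strata (the singular vertices) and the way they are attached to the singular edges. First I would translate an infinitesimal deformation of the hyperbolic cone-structure on $\bar M$ into tensorial language. On the smooth part $M=\bar M\setminus\Sigma$ such a deformation is represented by a closed $1$-form with values in the flat bundle $\E$ associated by the holonomy $\rho\colon\pi_1(M)\to\Isom(\H^3)$ to the Lie algebra of $\Isom(\H^3)$, two such forms being equivalent when they differ by an exact one. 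Decomposing this Lie algebra pointwise into its ``infinitesimal translation'' and ``infinitesimal rotation'' parts turns the $1$-form, after a gauge normalisation, into a symmetric tensor $h$ on $M$ with $\tr h=0$, $\na h=0$, solving the linearised Einstein equation of the hyperbolic background, $\n^{*}\n h=2\RO h$. Angle-preservation of the deformation corresponds to the absence, in the asymptotic expansion of $h$ near each singular edge, of the leading term that would change the cone angle.

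Second comes the Weitzenböck / integration-by-parts step. On a \emph{closed} hyperbolic manifold the relevant Bochner formula --- in which the curvature of $\E$ and the Ricci curvature of the metric combine with a favourable sign --- forces $h\equiv0$; this is infinitesimal Mostow--Calabi--Weil rigidity. Since $M$ is noncompact, one instead works on the truncation $M_\varepsilon=\bar M\setminus U_\varepsilon(\Sigma)$, where the same identity takes the shape $\int_{M_\varepsilon}(\text{a pointwise nonnegative quantity in }h,\,\n h)=\int_{\d M_\varepsilon}(\cdots)$, so the whole problem reduces to showing that the boundary integral tends to $0$ as $\varepsilon\to0$.

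Third --- and this is where essentially all the work lies --- one must analyse the linearised operator near each stratum of $\Sigma$. Near a singular edge the metric is asymptotically a product of an interval with a two-dimensional cone of angle $\theta$, and near a singular vertex it is a metric cone over a spherical cone-surface; in both cases the linearised Einstein operator is elliptic of edge / incomplete-cone type, and one computes its indicial roots on these model geometries. One then shows that under finiteness of the energy $\int_M(|\n h|^2+|h|^2)$ the deformation possesses a distinguished harmonic representative whose decay near $\Sigma$ is good enough to kill the boundary integral in the limit, \emph{provided} every cone angle is $\le 2\pi$; the borderline indicial solutions that would otherwise obstruct this are precisely the trivial deformations and the cone-angle-changing ones, both excluded here. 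This step rests on a Fredholm and regularity theory for the linearised operator on manifolds with edge/conical singularities --- the Mazzeo edge calculus and the explicit structure of the kernels of the model operators --- and it is exactly here that the hypothesis on the cone angles is indispensable. Granting it, $h\equiv0$, so the hyperbolic deformation is trivial.

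In the Euclidean case the curvature term $\RO$ disappears and the same identity instead yields a \emph{parallel} trace-free symmetric tensor $h$ on $M$; a parallel tensor is pinned down by holonomy-invariant data at a point, and one checks this forces the induced first-order variation of the spherical link at each codimension-$3$ point to vanish --- the stated weaker conclusion. The polyhedral corollaries follow by applying the cone-manifold theorem to the double $D\mathcal P$ of a convex polyhedron $\mathcal P$: convexity makes each dihedral angle $\alpha<\pi$, so the corresponding singular edge of $D\mathcal P$ has cone angle $2\alpha<2\pi$; a dihedral-angle-preserving infinitesimal deformation of $\mathcal P$ doubles, by the reflection symmetry, to a cone-angle-preserving infinitesimal deformation of $D\mathcal P$; and triviality upstairs in the hyperbolic case --- resp.\ preservation of the vertex links, which are the relevant spherical cone-surfaces, in the Euclidean case --- descends to the desired statement for $\mathcal P$. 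The main obstacle, throughout, is step three: carrying the Bochner argument past the smooth-singular-locus setting of Hodgson--Kerckhoff to an honest singular graph.
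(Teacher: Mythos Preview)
Your outline is essentially correct, but note that this theorem is not actually proved in the present paper: it is quoted from the companion article \cite{Maz-GM}, and the only trace of its proof here is the reformulation as Theorem~\ref{thm:rigL2} together with the discussion in \S\ref{subsec:infrig} explaining the passage between closed $\E$-valued $1$-forms and curvature-preserving symmetric $2$-tensors $h$. Your sketch matches that reformulation closely --- the $L^2$ hypothesis on $h$ and $\nabla h$ in Theorem~\ref{thm:rigL2} is exactly the finiteness-of-energy condition you invoke, the Bochner identity with boundary term on $M_\varepsilon$ is the core mechanism, and the indicial/edge-calculus analysis near the strata of $\Sigma$ (including the vertices, not just the edges) is precisely the advance over Hodgson--Kerckhoff that the paper attributes to \cite{Maz-GM}. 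The doubling argument for the polyhedral corollary is also the one the paper uses throughout. So as a description of the strategy of the cited result, your proposal is on target; there is simply no proof in this paper to compare against beyond those hints.
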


The goal of this article is to show a local rigidity result in the hyperbolic case, for convex polyhedra and closed cone-manifolds (with cone angles smaller than $2\pi$). The fact that infinitesimal rigidity implies local rigidity in this setting was already proven by Hodgson and Kerckhoff in \cite{HK} and by Wei\ss{} in \cite{Weiss2}, respectively in the case where the singular locus is a link and in the case where the cone angles are smaller than $\pi$ (in both papers, the authors also prove the infinitesimal rigidity). Actually, the technique in Section $6$ of Wei\ss{}'s article is valid as long as the singular locus is a trivalent graph. The main difficulty encountered in this paper is that for a non-trivalent singular locus, ``splitting'' of vertices may occur: consider for instance the double of the polyhedron depicted in Fig.~\ref{fig1}. But note that the topology of the singular locus changes. 

\begin{figure}[h]
\centering
\includegraphics{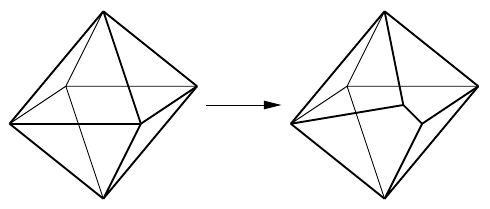}
\caption{Splitting of a vertex}\label{fig1}
\end{figure}

We will show in this article how to circumvent this difficulty and prove a strong version of the local Stoker problem in the hyperbolic case:

\begin{Thm}
The space of convex hyperbolic polyhedra with given combinatorial type is locally parametrized by the tuple of dihedral angles.
\end{Thm}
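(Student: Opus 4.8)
The strategy is to deduce this statement from the previous theorem about cone-manifolds via the classical doubling construction. Given a convex hyperbolic polyhedron $\mathcal{P}$ with combinatorial type $\mathcal{C}$, I would form its double $D\mathcal{P}$ — two isometric copies of $\mathcal{P}$ glued along their boundary — which carries a natural structure of a closed, orientable hyperbolic $3$-cone-manifold whose singular locus $\Sigma$ is (a copy of) the $1$-skeleton of $\mathcal{P}$, and whose cone angle along the edge $e$ equals twice the dihedral angle of $\mathcal{P}$ along $e$. Convexity of $\mathcal{P}$ guarantees that all dihedral angles are $<\pi$, hence all cone angles of $D\mathcal{P}$ are $<2\pi$, so the previous theorem applies: the space of hyperbolic cone-manifold structures on $(D\mathcal{P},\Sigma)$ near $D\mathcal{P}$ is locally parameterized by the tuple of cone angles.

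**Transferring the parameterization.** The next step is to show that this local parameterization restricts correctly to the locus coming from polyhedra. The key point is that $D\mathcal{P}$ carries an isometric involution $\tau$ (swapping the two copies of $\mathcal{P}$) whose fixed-point set is exactly $\partial\mathcal{P}$, and that any hyperbolic cone-manifold structure on $(D\mathcal{P},\Sigma)$ sufficiently close to $D\mathcal{P}$ can be averaged to produce a nearby $\tau$-invariant structure; quotienting a $\tau$-invariant structure by $\tau$ recovers a convex hyperbolic polyhedron of combinatorial type $\mathcal{C}$ close to $\mathcal{P}$ (convexity and combinatorial type being open conditions). Conversely every nearby convex polyhedron of type $\mathcal{C}$ doubles to a nearby $\tau$-invariant cone-manifold structure. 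Thus the deformation space of polyhedra near $\mathcal{P}$ embeds as (a neighborhood in) the $\tau$-invariant part of the deformation space of $(D\mathcal{P},\Sigma)$, and on this subspace the cone-angle map is simply twice the dihedral-angle map. Since the full cone-angle map is a local parameterization (local homeomorphism onto its image), its restriction to the $\tau$-invariant slice is as well — one only has to check that the slice maps onto the ``$\tau$-symmetric'' cone angles, i.e.\ those assigning equal values to the two copies of each edge of $\mathcal{P}$ inside $\Sigma$, which is automatic since every $\tau$-invariant structure has $\tau$-symmetric cone angles and, conversely, the averaging argument shows a structure with $\tau$-symmetric cone angles is isotopic to a $\tau$-invariant one.

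**Main obstacle.** The genuinely delicate point is the averaging/symmetrization step: one must argue that a hyperbolic cone-manifold structure on $(D\mathcal{P},\Sigma)$ near the standard one, but a priori not $\tau$-invariant, can be moved by a small isotopy to a $\tau$-invariant one — equivalently, that the cone-manifold has a developing-map description compatible with $\tau$ and that the holonomy of a nearby symmetric structure can be conjugated to one commuting with the $\Z/2$ action. This is where the rigidity input really gets used: by Theorem 1 (the infinitesimal Stoker conjecture) together with the local parameterization, two nearby cone-manifold structures with the same cone angles are isometric, so a nearby structure and its $\tau$-pushforward, which share the same cone angles when the angle data is symmetric, must be isometric by a small isometry; a standard argument then upgrades this to $\tau$-invariance of the structure itself. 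Everything else — the doubling construction, the openness of convexity and of the combinatorial type, and the identification of cone angles with twice the dihedral angles — is routine.
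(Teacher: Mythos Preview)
Your strategy is sound and ultimately correct, but it differs from the paper's and carries a small confusion plus one step that is less ``standard'' than you suggest.

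First, a correction: in the double $D\mathcal{P}$ the singular locus $\Sigma$ is a \emph{single} copy of the $1$-skeleton of $\mathcal{P}$, sitting inside the fixed set $\partial\mathcal{P}$ of $\tau$; there are not two copies of each edge, so every cone-angle tuple is automatically $\tau$-symmetric. With this in hand your argument actually simplifies: for any nearby cone-manifold structure $g$, the pullback $\tau^*g$ has exactly the same cone angles as $g$, so by the previous theorem $g$ and $\tau^*g$ coincide in $C(M,\Sigma)$. The remaining step---passing from ``$g$ and $\tau^*g$ differ by a diffeomorphism isotopic to the identity'' to ``$g$ is isometric to a genuinely $\tau$-invariant metric, whose quotient is a polyhedron''---does work, but it needs the fact that the isometry group of a closed hyperbolic cone-manifold with cone angles below $2\pi$ is discrete (so that the orientation-preserving isometry $\sigma^2$, with $\sigma=\tau\circ\phi^{-1}$ close to $\tau$, must be the identity). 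This follows from irreducibility of the holonomy, but is not entirely routine.

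The paper avoids the symmetrization argument altogether. It first proves, by an elementary count in the Klein model (cutting out the polyhedron in $(\mathbb{R}^3)^{|V|}$ by the face-planarity equations and quotienting by $\mathrm{Isom}^+(\mathbb{H}^3)$), that $Pol(\mathcal{P})$ is a smooth manifold of dimension $|E|$. The doubling map $Pol(\mathcal{P})\to C(M,\Sigma)$ is then an injective continuous map between manifolds of the same dimension $|E|$, hence a local homeomorphism by invariance of domain, and the cone-angle parameterization of $C(M,\Sigma)$ pulls back directly to the dihedral-angle parameterization of $Pol(\mathcal{P})$. This is shorter and needs no discussion of $\tau$ or of isometry groups; your approach, in exchange, has the conceptual payoff of identifying the image of the doubling map as exactly the $\tau$-invariant locus, and does not require the separate dimension computation for $Pol(\mathcal{P})$.
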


This main result is consequence of the following parametrization theorem (compare with \cite[Theorem 4.7]{HK} and \cite[Corollary 1.3]{Weiss2}):

\begin{Thm}
Let $\bar M$ be a closed, orientable, hyperbolic cone-$3$-manifold with singular locus $\Sigma$ and whose cone angles are smaller than $2\pi$. Then the space of hyperbolic cone-$3$-manifold structures with singular locus $\Sigma$ near $\bar M$ is locally parametrized by the tuple of cone angles.
\end{Thm}

This result leaves some questions open. Firstly, it does not imply global rigidity, namely, the congruence of two convex hyperbolic polyhedra having same combinatorial type and dihedral angles. It is well known when the dihedral angles are non-obtuse, i.e.~smaller than $\pi/2$: this is the famous Andreev's Theorem \cite{Andreev}, see also \cite{RHD} for a corrected and allegedly more readable proof. But it remains unsolved otherwise, not to mention the case where some dihedral angles are bigger than $\pi$. The same situation happens for hyperbolic cone-$3$-manifolds, where global rigidity is known to hold only in the case where all cone angles are smaller than $\pi$, see \cite{Kojima,Weiss3}. When some cone angles are bigger than $2\pi$, the situation is more contrasted: flexible cone-manifolds exist \cite{Izmestiev}, even if local rigidity still hold in some cases \cite{HK3}.

Secondly, an analogous local result for Euclidean convex polyhedra or cone-manifolds would be very interesting. But the technique used here can certainly not be applied directly, since in the Euclidean case angle-preserving infinitesimal deformations do exist; actually, even what the statement should be is not quite obvious. On the other hand, the Stoker problem is known to be false in the spherical case, see \cite{SchlenkerPoly}.

Finally, to complete the local description of the space of hyperbolic cone-manifolds one has to understand the aforementioned splittings of vertices. This theory is rather delicate and is the subject of a subsequent paper \cite{MontcouqW} in collaboration with Hartmut Wei\ss{}. The main difficulty is that there are infinitely many ways to split a vertex; in particular, and contrary to the present setting, it is no longer the case that the deformation of the double of a polyhedron stays in that class. 

\bigskip

The outline of this article is as follows. In Section 2, we begin by giving a precise definition of a hyperbolic cone-$3$-manifold, then we review classic material about deformations of hyperbolic structure and see what it means for cone-manifolds. In particular, we reformulate the infinitesimal rigidity theorem of \cite{Maz-GM} in this formalism. Roughly speaking, a hyperbolic structure on a $3$-manifold $M$ is locally determined by the conjugacy class of its holonomy representation, i.e.~a homomorphism $\rho : \pi_1(M) \to PSL(2,\C)$. The space of infinitesimal (i.e.~first-order) deformations of this structure is thus identified with the group cohomology space $H^1(\pi_1(M);Ad\circ \rho)$, which in turn can be identified with the cohomology group $H^1(M;\E)$ for $1$-forms with value in a geometric vector bundle $\E$. The infinitesimal rigidity in this setting corresponds to vanishing results for these cohomology spaces. 

The fact that $M$ is the regular part of a cone-manifold with given singular locus has important consequences on its holonomy representation $\rho$, which are best seen on the induced representation $i^* \rho$ on the boundary of a tubular neighborhood of the singular locus. In Section 3, we study this kind of surface group representations and their deformations. As a result, we obtain on the space of representations a system of local coordinates near $i^*\rho$. In Section 4, we show how to lift these local coordinates to the space of hyperbolic structures near $M$; this yields the wanted parametrization on the subset of cone-manifolds with given singular locus. The application to the Stoker problem is then an easy corollary.

\bigskip

It should be noted that results similar to those presented in this article and in \cite{Maz-GM} have been simultaneously and independently found by H.~Wei\ss{} \cite{Weiss4}.

\section{Deformation theory of hyperbolic cone-manifolds and infinitesimal rigidity}
\label{sec:defotheory}

\subsection{Hyperbolic cone-$3$-manifolds}
\label{subsec:hypconeman}

There exist several ways to define cone-manifolds, depending on which aspect the author wants to emphasize. In this article, as we do not need a whole general theory, we will use somewhat simplified definitions, following Thurston \cite{ThurstonShapes}. The interested reader can refer to \cite{Maz-GM} for a more detailed approach.

A spherical cone-surface $\bar S$ is a $2$-dimensional singular Riemannian manifold, such that each point admits a neighborhood in which the expression of the metric $g$ in polar coordinates is 
\begin{equation}\label{eq:dim2}
g= dr^2 + \sin(r)^2 d\theta^2,\ r\in[0,\epsilon),\ \theta \in \R/\alpha\Z.\end{equation}
If $\alpha$ is equal to $2\pi$, then this neighborhood is isometric to a disc in the two-sphere $\mathbb{S}^2$: such a point is called regular. Otherwise, the point is singular and the quantity $\alpha$ is called the cone angle of this singular point. The set of regular points, called the regular part of  $\bar S$, is easily seen to be an open dense subset of $\bar S$; the singular points are isolated and form the singular locus of $\bar S$. In this article, we will mainly focus on orientable cone-manifolds with cone angles smaller that $2\pi$; the Gauss-Bonnet formula implies that such a spherical cone-surface is topologically the $2$-sphere $\mathbb{S}^2$. 
When it has only two singular points, the metric $g$ can be expressed globally as
\begin{equation}g=dr^2 + \sin(r)^2 d\theta^2, \ r\in[0,\pi],\ \theta \in \R/\alpha\Z; \label{eq:2points}
\end{equation}
this type of spherical cone-surfaces will be relevant thereafter.

Likewise, a hyperbolic cone-$3$-manifold  is a $3$-dimensional singular Riemannian manifold, such that each point admits a neighborhood in which the expression of the metric $g$ in spherical coordinates is 
\begin{equation} g = dr^2 + \sinh(r)^2 g_{\bar L},\ r\in[0,\epsilon) \label{eqexprsph} \end{equation}
where $g_{\bar L}$ is the singular metric of a spherical cone-surface $\bar L$, called the link of the point; for the sake of simplicity we will require that $\bar L$ is closed, connected and orientable. As in the surface case, if $\bar L$ is the standard $2$-sphere, then this neighborhood is isometric to a ball in the hyperbolic space $\H^3$, and the point is called regular; otherwise it is called singular. Now if $\bar L$ is a topological $2$-sphere with only two cone points as in \eqref{eq:2points}, the metric $g$ can also be expressed locally in cylindrical coordinates as 
\begin{equation} g = d\rho^2 + \sinh(\rho)^2 d\theta^2 + \cosh(\rho)^2 dz^2,\ \rho\in[0,\epsilon),\ \theta \in \R/\alpha\Z ,\ z \in (-\epsilon,\epsilon) \label{eqexprcyl} \end{equation}
In this coordinate system, the points of the set $\{\rho=0\}$ are singular and share the same local expression of the metric. Their union is called a singular edge and the quantity $\alpha$ is called the cone angle (or dihedral angle) of this singular edge. The union of the singular edges forms an open, dense, $1$-dimensional subset of the singular locus. The remaining singular points are isolated and are called singular vertices; topologically, the singular locus is a graph, geodesically embedded in the cone-manifold.  

\medskip

Given a hyperbolic polyhedron $\mathcal{P}$ (without ``removable edges'', i.e.~edges with dihedral angles equal to $\pi$), we can construct its double by gluing together $\mathcal{P}$ and its mirror image along matching faces. This double is precisely a hyperbolic cone-$3$-manifold, whose singular locus corresponds to the edges and vertices of $\mathcal{P}$, and whose cone angles are exactly twice the dihedral angles of $\mathcal{P}$. As mentioned in the introduction, this construction allows to translate statements relative to cone-manifolds into statements relative to polyhedra; note that if $\mathcal{P}$ is convex, then its dihedral angles are smaller than $\pi$, thus the cone angles of its double are smaller than $2\pi$.

\subsection{Holonomy representation and developing map}

Let $\bar M$ be a connected, orientable hyperbolic cone-$3$-manifold and denote by $M$ (resp. $\Sigma$) its regular part (resp. singular locus). Then $M$ is an incomplete hyperbolic $3$-manifold, whose metric completion is exactly $\bar M$, and we can apply to $M$ the classic machinery of geometric structures (see \cite{Goldman4} for a thorough exposition of the subject, and also \cite{HK} for the case of hyperbolic cone-$3$-manifolds).

Let $\tilde M$ denote the universal cover of $M$ and $\pi : \tilde M \to M$ the associated projection; the hyperbolic metric $g$ on $M$ lifts to a hyperbolic metric $\tilde g = \pi^*g$ on $\tilde M$, and $\pi$ becomes a local isometry. Let us choose a base-point $\tilde x$ in $\pi^{-1}(x)$. Then the action of $\pi_1(M,x)$ on $\tilde M$ via deck transformations is well-defined; it is transitive on the fibers. The hyperbolic metric $\tilde g$ on $\tilde M$ allows to define by analytical continuation the developing map $dev :  \tilde{M} \to \H^3,$
which is a local isometry, well-defined up to an isometry of $\H^3$ (acting by left composition). In particular $\tilde g = dev^* g_{\H^3}$, and the metric on $M$ is completely determined by the developing map and the projection $\pi$. The developing map clearly features an equivariant property: there exists an application $hol : \pi_1(M,x) \to \Isom^+(\H^3)$, called the holonomy representation, such that for all $p \in \tilde M$ and $\gamma \in \pi_1(M,x)$,
$$dev(\gamma.p) = hol(\gamma).dev(p).$$
The holonomy representation is well-defined up to conjugation by an isometry of $\H^3$. Note that contrary to the complete case, it has no reason to be faithful nor discrete.

Let us denote by $R(\pi_1(M,x),\Isom^+(\H^3))$ the representation space, i.e.~the set of all group homomorphisms from $\pi_1(M,x)$ to $\Isom^+(\H^3)$, endowed with the compact-open (or pointwise convergence) topology. Denote also by $X(\pi_1(M,x),\Isom^+(\H^3))$ the $\Isom^+(\H^3)$-character variety of $\pi_1(M,x)$, i.e.~the quotient of the representation space by the action of $\Isom^+(\H^3)$ by conjugation. The holonomy representation being determined, up to conjugation, by the hyperbolic metric $g$ on $M$, we get a map 
\begin{equation} \label{eqmaphol1} \{\mbox{hyperbolic metrics on } M\} \to X(\pi_1(M,x),\Isom^+(\H^3)).\end{equation}

Now if $M$ is the regular part of an oriented cone-manifold, its holonomy representation has some additional properties. 
If $p$ is a singular point of $\bar M$, denote by $L_p$ the regular part of its spherical link and by $N_p$ the regular part of small enough neighborhood of $p$; then $L_p$ is a deformation retract of $N_p$. The inclusion map $i : N_p \hookrightarrow M$ induces a representation $i^*hol : \pi_1(N_p)\simeq \pi_1(L_p) \to \Isom^+(\H^3)$. The image of $i^*hol$ is then contained in a maximal compact subgroup $K$ of $\Isom^+(\H^3)$:  the induced representation is indeed (via the identification $K\simeq SO(3)$) the holonomy representation of the spherical structure on $L_p$. This observation is of course most relevant when $p$ is a singular vertex of $\Sigma$. When applied to a point in a singular edge $e$, it shows that if $\gamma \in \pi_1(M,x)$ is freely homotopic to a meridian around $e$, then $hol(\gamma)$ is an elliptic isometry, whose rotation angle is equal modulo $2\pi$ to the cone angle of this singular edge.

\subsection{Local deformation of a hyperbolic structure}

Let $g_1$ and $g_2$ be two incomplete hyperbolic metrics on an orientable $3$-manifold $M$, whose metric completions $\bar M_1$ and $\bar M_2$ are cone-manifolds. We will say that these two hyperbolic cone-manifolds are equivalent if there exists a diffeomorphism $\phi$ of $M$, isotopic to the identity, such that $g_1 = \phi^* g_2$. A cone-manifold structure is then an equivalence class for this relation. More generally, we can define the same equivalence relation for any hyperbolic metrics on $M$; we will denote the quotient space (or hyperbolic structure space) by $\mathcal{M}$. Since two equivalent metrics induce the same (up to conjugation) holonomy representation, we can quotient \eqref{eqmaphol1} as a map
\begin{equation} \label{eqmaphol2} \mathcal{M} \to X(\pi_1(M,x),\Isom^+(\H^3)).\end{equation}

There is another equivalence relation we need to introduce, namely the one induced by thickening. For simplicity, we will formulate it when $M$ is diffeomorphic to the interior of a compact manifold with boundary, which is always the case if $M$ is the regular part of a cone-manifold. Then $M$ is diffeomorphic to $M \amalg\, \d M\! \times\! [0,\epsilon)$. Given a hyperbolic metric $g$ on $M$, this diffeomorphism pulls it to a metric on $M \amalg\, \d M\! \times\! [0,\epsilon)$; let us denote its restriction to $M$ by $g'$. The metric $g$ is called a thickening of $g'$, and both give rise to the same (up to conjugation) holonomy representation. Let us denote by $\sim$ the induced equivalence relation on $\cal M$; we obtain a natural map 
\begin{equation} \label{eqmaphol3} [hol] : \mathcal{M}_{/\sim} \to X(\pi_1(M,x),\Isom^+(\H^3)).\end{equation}
We would like this map to be a homeomorphism, but actually this is not the case unless we restrict it to irreducible representations. More precisely, we have the following result, which is true in a much more general framework: 

\begin{Thm}[Deformation Theorem, see Section 3 of \cite{Goldman4}] \label{thm:Goldman}\mbox{}\\
Let $\mathcal{R} = [hol]^{-1}(X^{irr}(\pi_1(M,x),\Isom^+(\H^3))) \subset \mathcal{M}_{/\sim}$. Then the map $$[hol] : \mathcal{R} \to X^{irr}(\pi_1(M),\Isom^+(\H^3))$$ is a local homeomorphism.
\end{Thm}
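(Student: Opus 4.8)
The plan is to prove that the restricted map $[hol] : \mathcal{R} \to X^{irr}(\pi_1(M),\Isom^+(\H^3))$ is \emph{continuous}, \emph{open}, and \emph{locally injective}, and then to conclude: a continuous, open, locally injective map is, around each point, a homeomorphism onto an open subset of the target, hence a local homeomorphism. Continuity is immediate, since the holonomy of a hyperbolic metric varies continuously, in the compact-open topology, with the metric, and this descends to the quotients defining $\mathcal{M}_{/\sim}$ and $X$. Throughout I fix $[g_0]\in\mathcal{R}$ with irreducible holonomy $\rho_0$, write $M=\mathrm{int}(\bar N)$ for a compact manifold $\bar N$ with boundary, and recall that $g_0$ is encoded by a developing map $D_0:\tilde M\to\H^3$ together with the cocycle $\rho_0:\pi_1(M)\to\Isom^+(\H^3)$.

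\emph{Openness (the Ehresmann--Thurston principle).} Cover a compact core $K\subset M$ by finitely many open sets $U_i$ small enough that each lift $\tilde U_i\subset\tilde M$ is mapped by $D_0$ diffeomorphically onto a ball of $\H^3$, and record the locally constant transition elements, which form a $\rho_0$-cocycle. For a representation $\rho$ close to $\rho_0$ one perturbs these transition data accordingly; because embeddings into $\H^3$ are stable under $C^1$-small perturbations, and because $\H^3$ is a Hadamard manifold (so the barycentre construction supplies canonical interpolations), one can solve the compatibility conditions and reassemble a $\rho$-equivariant developing map $D_\rho:\tilde K\to\H^3$ close to $D_0$. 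This yields a hyperbolic metric on $K$; extending it over the ends by an arbitrary thickening produces a class in $\mathcal{M}_{/\sim}$, close to $[g_0]$, whose holonomy is $\rho$. Hence the $[hol]$-image of any neighbourhood of $[g_0]$ contains a neighbourhood of $[\rho_0]$, which is exactly openness of $[hol]|_{\mathcal{R}}$ (the irreducibility of $\rho_0$ ensures its $\Isom^+(\H^3)$-orbit is closed, so the quotient topology is well-behaved near $[\rho_0]$).

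\emph{Local injectivity.} Suppose $[g_1],[g_2]$ lie near $[g_0]$ with the same image in $X^{irr}$. After conjugating a representative we may assume $hol(g_1)=hol(g_2)=\rho$ on the nose, and this normalisation is canonical because an irreducible representation into $\Isom^+(\H^3)=PSL(2,\C)$ has trivial stabiliser. The developing maps $D_1,D_2:\tilde M\to\H^3$ are then both $\rho$-equivariant and both $C^0$-close to $D_0$, so on each $\tilde U_i$ the map $D_2^{-1}\circ D_1$ is a diffeomorphism near the identity; averaging these local comparisons against a $\pi_1(M)$-invariant partition of unity, again via the barycentre map of $\H^3$, produces a single $\rho$-equivariant diffeomorphism $\tilde\phi$ of a neighbourhood of the core in $\tilde M$, close to the identity, with $D_1=D_2\circ\tilde\phi$. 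It descends to a diffeomorphism $\phi$ of $M$ isotopic to the identity (extended over the ends at the cost of replacing both metrics by thickenings), with $\phi^*g_2=g_1$; hence $[g_1]=[g_2]$ in $\mathcal{M}_{/\sim}$.

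\emph{Main obstacle.} The delicate point is the openness step: rebuilding a genuine, globally defined developing map out of perturbed cocycle data requires careful bookkeeping over the finite cover together with the stability estimates for immersions, and — since $M$ is noncompact, being the regular part of a cone-manifold — one must argue on a compact core and lean on the thickening equivalence $\sim$ to carry structures and equivariant diffeomorphisms over the ends without any control there. By contrast the injectivity step is comparatively soft, but it genuinely uses irreducibility twice: to make the conjugation identifying the two holonomies unique, and to guarantee that $X^{irr}$ is locally Hausdorff so that ``open map'' and ``local homeomorphism'' are the right notions.
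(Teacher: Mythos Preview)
The paper does not prove this theorem: it is quoted as a known result with the reference ``see \cite{Goldman} section 3'' and used as a black box throughout. Your sketch is the standard Ehresmann--Thurston argument (continuity, openness by rebuilding developing maps from perturbed cocycle data on a finite cover of a compact core, local injectivity via equivariant comparison of nearby developing maps), which is precisely the content of the cited reference; the paper's only contribution here is the explicit introduction of the thickening equivalence $\sim$ so that the statement makes sense for the noncompact regular part of a cone-manifold, and you have correctly identified that this is where the compact-core argument has to be supplemented.
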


This important result shows that in order to study the local deformation of a hyperbolic structure, it is sufficient to understand the local structure of the irreducible part of the character variety $X(\pi_1(M,x),\Isom^+(\H^3))$.

This space can actually be studied in a quite general setting. For any discrete group $\Gamma$ and any Lie group $G$, we can consider the set  $R(\Gamma,G)$ of all homomorphisms from $\Gamma$ to $G$; such an homomorphism is called a $G$-representation of $\Gamma$. If $\Gamma$ admits a finite presentation $\<s_1,s_2,\ldots,s_n\ |\ f_1(s_1,\ldots,s_n),\ldots,f_p(s_1,\ldots,s_n)\>$ then its representation space $R(\Gamma,G)$ can be identified with the subset 
$$\{(x_1,\ldots,x_n) \in G^n\ ;\ f_i(x_1,\ldots,x_n)=e\ \forall i=1\ldots p \},$$ which is an algebraic (resp.~analytic) variety as soon as $G$ is an algebraic group (resp.~complex Lie group). This identification allows to define the usual topology as well as the Zariski one on $R(\Gamma,G)$, the former being the pointwise convergence one. The group $G$ acts on $R(\Gamma,G)$ by conjugation and we denote the quotient space by $X(\Gamma,G)$. Note that this quotient is not well-behaved in general: it is usually non-Hausdorff, having non-closed points. In the literature, the notation $X(\Gamma,G)$ generally stands for the algebraic-geometric quotient, whose elements correspond to the closure of conjugation classes in $R(\Gamma,G)$; since we will be dealing with irreducible representations, the difference between with these two quotients will not be an issue here.

Let $\rho : \Gamma \to G$ be a representation. A first-order (or infinitesimal) deformation of $\rho$ is then a function $\dot \rho : \Gamma \to \mathfrak{g}$, where $\mathfrak g$ is the Lie algebra of $G$, satisfying the cocycle condition
\begin{equation}\dot \rho(\gamma_1 \gamma_2) = \dot \rho(\gamma_1) + Ad(\rho(\gamma_1))(\dot \rho(\gamma_2)) \ \ \forall \gamma_1, \gamma_2 \in \Gamma \label{eqcocycle} \end{equation}
We denote by $Z^1(\Gamma,Ad\circ \rho)$ the space of all maps  from $\Gamma$ to $\mathfrak{g}$ satisfying this cocycle condition; it is canonically identified with the Zariski tangent space of $R(\Gamma,G)$ at $\rho$. 

A deformation of the representation $\rho$ is called trivial if it corresponds to a conjugation, namely if it is of the form $\dot \rho(\gamma) = \frac{d}{dt}|_{_{t=0}}\, g_t \rho(\gamma) g_t^{-1}$. This is equivalent to satisfying the coboundary condition: there exists $v \in \mathfrak{g}$ such that for all $\gamma \in \Gamma$, 
$$\dot \rho(\gamma) = v - Ad(\rho(\gamma))(v).$$ 
We denote by $B^1(\Gamma,Ad\circ \rho)$ the space of all maps  from $\Gamma$ to $\mathfrak{g}$ satisfying this coboundary condition. The first group cohomology space $H^1(\Gamma,Ad\circ \rho)$ is then defined as the quotient $Z^1(\Gamma,Ad\circ \rho) / B^1(\Gamma,Ad\circ \rho)$. It is canonically identified with the Zariski tangent space of $X(\Gamma,G)$ at the equivalence class of $\rho$. 

If $\Gamma$ is the fundamental group of a connected manifold $M$, we can relate this construction to more usual cohomology spaces. More precisely, we can define on $M$ a vector bundle $\E$ associated to $\rho$ as follows. On the universal cover $\tilde M$ of $M$ we can consider the trivial bundle $\tilde \E = \tilde M \times \mathfrak{g}$; it has a trivial flat connexion $D$. Recall that $\pi_1(M,x)$ acts by deck transformations on $\tilde M$, transitively on the fibers. Then we can define $\E$ as the quotient $(\tilde M \times \mathfrak{g}) / \sim$, where $(p,v) \sim (\gamma.p, Ad \circ \rho(\gamma) (v))$. The connexion on $\tilde \E$ descends to a flat connexion, still denoted $D$, on $\E$. 
The reason for introducing $\E$ is because there exists a classic isomorphism $$H^1(M;\E) \stackrel{\sim}{\to} H^1(\pi_1(M,x);Ad\circ \rho)$$ between $\E$-valued form cohomology and group cohomology; this isomorphism is given by integration of a closed $\E$-valued $1$-form along loops in $M$. This allows to make use of geometric analysis techniques in the study of local and infinitesimal deformations.

 For an orientable hyperbolic $3$-manifold $M$, $\E$ is a $\mathfrak{sl}(2,\C)$-bundle, and can be interpreted as the bundle of infinitesimal local isometries, or local Killing vector fields. The fiber over a point $x \in M$ corresponds to the vector space of germs at $x$ of Killing vector fields, and flat sections of $\E$ over an open set $U$ corresponds to Killing vector fields on $U$. The vector bundle $\E$ admits a geometric decomposition $\mathcal{P} \oplus \mathcal{K}$, where at a point $x$ the fiber $\mathcal{P}_x$ corresponds to ``infinitesimal pure translations'' through $x$ (i.e.~derivatives of, or Killing fields integrating as, hyperbolic isometries whose axis goes through $x$) and the fiber $\mathcal{K}_x$ to infinitesimal rotations (i.e.~elliptic isometries) fixing $x$; the sub-bundle $\mathcal{P}$ is naturally identified with the tangent bundle $TM$. Note that the flat connexion on $\E$ does not preserve this decomposition. 
 
 An element of $H^1(M;\E)$ will be called an infinitesimal deformation of the hyperbolic structure on $M$ (or of $M$ for short). Indeed, $H^1(M;\E)$ is identified to the Zariski tangent space of $X(\pi_1(M),\Isom^+(\H^3))$ at $[\rho]$. If $[\rho]$ is a smooth point of this quotient representation space, then this is the usual tangent space, and if $\rho$ is irreducible then by Theorem \ref{thm:Goldman} it corresponds to the tangent space to the set of hyperbolic structure on $M$. So elements of $H^1(M;\E)$ are in bijection with first-order deformations of the hyperbolic structure on~$M$.

\subsection{Infinitesimal rigidity}
\label{subsec:infrig}

We recall the following result from \cite{Maz-GM}:

\begin{Thm}
Let $\bar M$ be a closed, orientable, hyperbolic cone-$3$-manifold with all cone angles smaller than $2\pi$. Then $\bar M$ is infinitesimally rigid relative to its cone angles, i.e.\ every angle-preserving infinitesimal deformation is trivial.
\end{Thm}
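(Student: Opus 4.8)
This is the hyperbolic case of Theorem~\ref{thm:mazgm}, proved in \cite{Maz-GM}; here is the strategy. The plan is to recast an angle-preserving infinitesimal deformation cohomologically and then run a Bochner-type vanishing argument in the style of Hodgson--Kerckhoff~\cite{HK}, adapted so as to allow a singular locus with vertices. An angle-preserving infinitesimal deformation of $\bar M$ is a class $[\omega]\in H^1(M;\E)$ such that, for every singular edge $e$ with meridian $\gamma$, the first-order variation of the rotation angle of the elliptic holonomy $hol(\gamma)$ --- that is, of the cone angle of $e$ --- vanishes; one must show that every such $[\omega]$ is zero. First I would choose a good representative: as in \cite{HK}, the angle-preserving condition is precisely what ensures that $[\omega]$ admits a harmonic representative $\omega$ (closed and co-closed for the $L^2$ structure of $M$) which is square-integrable in a neighborhood of $\Sigma$; heuristically, the angle-changing deformation mode around each edge, which is exactly the one that fails to be $L^2$ near $e$, has been turned off.

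From such an $\omega$ I would extract, using the decomposition $\E=\mathcal P\oplus\mathcal K$ with $\mathcal P\cong TM$ recalled above, the first-order variation of the hyperbolic metric, a symmetric $2$-tensor $h=\dot g$; after correcting $\omega$ by a coboundary (an infinitesimal diffeomorphism) one may take $h$ transverse--traceless, so that the structure equations reduce to the linearization of the constant-curvature condition, a self-adjoint second-order elliptic equation for which there is a Weitzenb\"ock identity. Integrating that identity over the truncated manifold $M_\epsilon := M\setminus N_\epsilon(\Sigma)$, with $N_\epsilon(\Sigma)$ an $\epsilon$-neighborhood of the singular locus, gives an identity of the form
\[
\int_{M_\epsilon} Q(h)\,dV \;=\; \int_{\d N_\epsilon(\Sigma)} B(h,\n h)\,dA ,
\]
whose left-hand side is the integrated Bochner energy of the deformation, a nonnegative quantity vanishing only for $h\equiv0$ (the positivity being the phenomenon that underlies the rigidity of closed hyperbolic $3$-manifolds). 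Hence the whole matter reduces to showing that the boundary integral tends to $0$ as $\epsilon\to0$: one then gets $h\equiv0$, and since a closed $\E$-valued $1$-form with vanishing strain is a coboundary, $[\omega]=0$.

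To estimate $B(h,\n h)$ near a singular edge $e$ I would use the cylindrical model \eqref{eqexprcyl} and expand $h$ in Fourier modes in the angular variable $\theta\in\R/\alpha\Z$; each mode satisfies a system of ordinary differential equations in $\rho$ whose indicial roots can be written down explicitly, and square-integrability together with the angle-preserving hypothesis keeps only modes decaying fast enough as $\rho\to0$ for their contribution to $\int_{\d N_\epsilon(e)}B$ to vanish in the limit. This is where the hypothesis $\alpha<2\pi$ enters: it is exactly the condition under which the first ``dangerous'' indicial root stays on the favorable side of its critical value, so that no slowly-decaying or logarithmic mode survives. This part of the argument essentially revisits \cite{HK}.

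The genuinely new, and I expect hardest, point is the behavior of $h$ near a \emph{singular vertex} $p$, which does not occur in \cite{HK} (where $\Sigma$ is a disjoint union of circles) and is only partially handled in \cite{Weiss2} (where $\Sigma$ is trivalent). There the metric has the form $dr^2+\sinh^2 r\,g_{\bar L}$ with $\bar L$ the spherical cone-surface link of $p$, and the Fourier analysis in $\theta$ must be replaced by a spectral decomposition of $h$ near $p$ into eigensections of the appropriate operator on $\bar L$. One must show that this operator, on a spherical cone-surface all of whose cone angles are smaller than $2\pi$ --- hence, by Gauss--Bonnet, topologically a sphere --- has a spectral gap forcing every square-integrable solution near $p$ to decay at a rate making the vertex contribution to $\int_{\d N_\epsilon(\Sigma)}B$ go to $0$ as $\epsilon\to0$; morally, the spherical links are rigid enough exactly when their cone angles are subcritical. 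Granting this, the edge and vertex estimates together give $\int_{\d N_\epsilon(\Sigma)}B\to0$, whence $h\equiv0$ and the deformation is trivial; the full verification is carried out in \cite{Maz-GM}.
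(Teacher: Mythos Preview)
Your proposal is appropriate: the paper does not prove this theorem but only recalls it from \cite{Maz-GM}, restating it as Theorem~\ref{thm:rigL2} in the language of $L^2$ curvature-preserving deformations of the metric tensor and explaining the dictionary between that formulation and the cohomological one. Your sketch of the Bochner--Weitzenb\"ock strategy, with the edge analysis following \cite{HK} and the new difficulty located at the singular vertices via the spectral analysis on the spherical cone-surface links, is a faithful outline of the argument carried out in \cite{Maz-GM} and is consistent with the paper's reformulation.
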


Although it is not explicitly stated, this theorem only deals with infinitesimal deformations that preserve the singular locus, i.e.~no splitting of vertex occurs. If it were the case, new singular edges would appear; the more general rigidity result of \cite{MontcouqW} takes their lengths into account.
 
 The above theorem was actually proven in the language of curvature-preserving infinitesimal deformations of the metric tensor.
The relation with the formalism we have exposed earlier, i.e.~deformations as closed $\E$-valued $1$-forms, is very well described in \cite{HK}; for the sake of clarity we will outline this relation now. If $\omega$ is a closed $\E$-valued $1$-form, we can lift it to a closed $\tilde \E$-valued $1$-form $\tilde \omega$ on $\tilde M$. Since $\tilde M$ is simply connected, $\tilde \omega$ is exact and hence equal to $D\tilde s$ for some section $\tilde s$ of $\tilde \E$. Similarly to $\E$, the fiber bundle $\tilde \E$ decomposes as $\tilde{\mathcal{P}} \oplus \tilde{\mathcal{K}}$,
 where $\tilde{\mathcal{P}} \simeq T \tilde M$. The $\tilde{\mathcal{P}}$-part of $\tilde s$ can thus be identified with a vector field $\tilde X$ on $\tilde M$, which corresponds to the deformation of the developing map. In general $\tilde X$ does not descend to a vector field on $M$, but it satisfies an equivariant property (it is ``automorphic'' in the language of \cite{HK}). As a consequence the infinitesimal deformation of the metric tensor $\tilde h = L_{\tilde X} \tilde g$ actually descends to a well-defined curvature-preserving deformation $h$ on $M$. Alternatively, we can also consider the $\mathcal{P}$-part of $\omega$; it identifies with a $TM$-valued $1$-form (but not closed in general). Using the isomorphism between $TM$ and $T^*M$ given by the metric $g$, it can be seen as a section of $T^*M\otimes  T^*M$. Its symmetric part $h$ is then the infinitesimal deformation of the metric.

On the other hand, if $h$ is a curvature-preserving infinitesimal deformation of $g$, then it is locally an infinitesimal isometry, i.e.~it can be written locally as $L_X g$ for some local vector field $X$. The lift $\tilde h$ of $h$ to the universal cover $\tilde M$ is then equal to $L_{\tilde X} \tilde g$ for some globally defined vector field $\tilde X$, which is automorphic. Now there exists a section $\tilde r$ of $\tilde{\mathcal{K}}$ such that the differential $\tilde \omega = D\tilde s$ of the section $\tilde{s} = \tilde{X} + \tilde{r}$ descends to a (closed) $\E$-valued $1$-form $\omega$ on $M$, which is the desired deformation. Such a section $\tilde r$ can be found by ``osculating'' $\tilde X$.

\bigskip

For a hyperbolic cone-manifold $\bar M$ with singular locus $\Sigma$, let $\bar U_\epsilon$ be a small enough tubular neighborhood of $\Sigma$. We will denote by $M_\epsilon$ its complement and by $U_\epsilon = \bar U_\epsilon \setminus \Sigma$ its regular part. Then $M_\epsilon$ is a manifold with boundary whose interior is diffeomorphic to $M$. We will denote its boundary by $\Sigma_\epsilon$; it is a deformation retract of $U_\epsilon$. These notations will be used extensively in the remainder of this article. 

It is clear that the hyperbolic cone-structure on $\bar U_\epsilon$ is relatively constrained. In particular, any infinitesimal deformation $h$ of the cone metric $g$ that preserves the hyperbolic cone-manifold structure and the topology of the singular locus can be put in ``standard form'' in $U_\epsilon$, see \cite[Section 3]{Maz-GM}. More precisely, near $\Sigma$ and up to an infinitesimal isometry (i.e.~up to a $2$-tensor of the form $L_X g$ for some global vector field $X$), the infinitesimal deformation is a linear combination of the following elementary deformations.

\begin{enumerate}
\item Infinitesimal deformations modifying the length, resp.~the twist parameter of a singular edge: in the cylindrical coordinates of \eqref{eqexprcyl}, they are of the form $h^e_\lambda = \cosh(\rho)^2 \phi'(z) dz^2$, resp.~$h^e_\tau = \sinh(\rho)^2 \phi'(z) d\theta dz$, where $\phi(z)$ is a smooth nondecreasing function vanishing for $z \leq -\epsilon/2$ and equal to $1$ for $z \geq \epsilon/2$. The corresponding $\mathcal E$-valued $1$-forms are simple to compute. Since the vector fields $\frac{\d}{\d z}$ and $\frac{\d}{\d\theta}$ are local Killing vector fields, they correspond to local parallel sections of $\E$, denoted $\sigma_{\d / \d z}$ and $\sigma_{\d / \d\theta}$. With these notations, the deformations $h^e_\lambda$ and $h^e_\tau$ correspond to the following closed $\E$-valued $1$-forms on $U_\epsilon$:
$$ \omega^e_\lambda = d\phi \wedge \sigma_{\d / \d z}, \qquad \omega^e_\tau = d\phi \wedge \sigma_{\d / \d\theta}$$

\item Infinitesimal deformations modifying the link $\bar L$ of a singular vertex $p \in \Sigma$: in the spherical coordinates of \eqref{eqexprsph}, they are of the form $h_p = \sinh(r)^2 h_{\bar L}$, where $h_{\bar L}$ is a curvature-preserving infinitesimal deformation of $\bar L$. To compute an associated $\E$-valued $1$-form, we first observe that in a neighborhood of $p$, the bundle $\E$ splits as $\E_1 \oplus \E_2$, where $\E_1$, resp.~$\E_2$, is the subbundle of infinitesimal rotations fixing $p$, resp.~infinitesimal translations through $p$. If we identify the level sets $\{r=constant\}$ with $\bar L$, then $\E_1$ corresponds to the bundle of infinitesimal spherical isometries of $\bar L$, so that we can associate to $h_p$ a closed $\E_1$-valued $1$-form $\omega_p$, independent of $r$. To be more precise we can distinguish two cases.
\begin{enumerate}
\item The deformation preserves the cone angles of $\bar L$. Then we can always assume that $h_{\bar L}$ is supported away from the cone points of the link, so that $h_p$ and $\omega_p$ vanish near the singular edges.
\item The deformation modifies some cone angles of $\bar L$, and hence also the cone angles of the corresponding singular edges of $\Sigma$ and the links of the facing vertices. Then we can always assume that in the coordinates of \eqref{eq:dim2} near a cone point, $h_{\bar L}$ is a multiple of $\sin(r)^2d\theta^2$, so that in the cylindrical coordinates of \eqref{eqexprcyl} $h_p$ is a multiple of $\sinh(\rho)^2d\theta^2$ and $\omega_p$ a multiple of $d\theta \wedge \sigma_{\partial/\partial \theta}$.
\end{enumerate}
\end{enumerate}

As mentioned above, an infinitesimal deformation is in standard form if it is equal in a neighborhood of $\Sigma$ to a linear combination of the above model deformations. With this terminology, we can restate the main infinitesimal rigidity result:

\begin{Thm}[Infinitesimal rigidity, \cite{Maz-GM}] \label{thm:rigL2} Let $\bar M$ be a closed, orientable, hyperbolic cone-$3$-manifold with all cone angles smaller than $2\pi$. Then any curvature-preserving infinitesimal deformation $h$ of $\bar M$ in standard form that does not modify the cone angles is trivial, i.e.~is equal to $L_Xg$ for some globally defined vector field $X$.
\end{Thm}

This theorem is proved using geometric analysis, and relies on the study of the second-order partial differential equation satisfied by $h$. This infinitesimal rigidity result has important consequences in terms of $\E$-valued cohomology groups: 

\begin{Prop}\label{prop:H1}
Under the assumptions of Theorem \ref{thm:rigL2}, the map $H^1(M,U_\epsilon;\E) \to H^1(M;\E)$ is the zero map, and the map $H^1(M;\E) \to H^1(U_\epsilon;\E)$ is injective with half-dimensional image.
\end{Prop}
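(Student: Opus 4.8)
The plan is to deduce everything from the infinitesimal rigidity statement of Theorem \ref{thm:rigL2} together with the long exact sequence of the pair $(M,U_\epsilon)$ and Poincaré–Lefschetz duality. First I would recall that, since $M$ is the interior of the compact manifold-with-boundary $M_\epsilon$ and $U_\epsilon$ deformation retracts onto $\Sigma_\epsilon = \partial M_\epsilon$, we have a long exact sequence
\begin{equation*}
\cdots \to H^1(M,U_\epsilon;\E) \xrightarrow{j} H^1(M;\E) \xrightarrow{i^*} H^1(U_\epsilon;\E) \to H^2(M,U_\epsilon;\E) \to \cdots
\end{equation*}
The flat bundle $\E$ carries a nondegenerate (complex) pairing coming from the Killing form on $\mathfrak{sl}(2,\C)$, so Poincaré–Lefschetz duality gives $H^k(M,U_\epsilon;\E) \cong H^{3-k}(M;\E)^*$, hence in particular $\dim H^1(M,U_\epsilon;\E) = \dim H^2(M;\E)$ and $\dim H^2(M,U_\epsilon;\E) = \dim H^1(M;\E)$. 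By the same token, $H^1(U_\epsilon;\E) \cong H^1(\Sigma_\epsilon;\E)$ carries its own duality pairing on the closed surface $\Sigma_\epsilon$, which forces $\dim H^1(U_\epsilon;\E)$ to be even and, more importantly, makes the restriction map $i^*$ into a Lagrangian-type map: the image of $H^1(M;\E) \to H^1(\Sigma_\epsilon;\E)$ is isotropic for the cup-product pairing. This last fact is the standard "half lives, half dies" principle for the boundary of a compact $3$-manifold and is where the half-dimensionality of the image will ultimately come from.

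Next I would input the rigidity. Theorem \ref{thm:rigL2}, once translated via the dictionary between $L^2$ curvature-preserving deformations of $g$ and $\E$-valued $1$-forms (as outlined in the preceding subsection, following \cite{HK}), says precisely that an infinitesimal deformation of $M$ which is "trivial near the boundary" — i.e. lies in the image of $H^1(M,U_\epsilon;\E)$, which is the relative/compactly-supported picture representing deformations supported away from $\Sigma_\epsilon$ — must itself be trivial, i.e. zero in $H^1(M;\E)$. This is exactly the statement that $j : H^1(M,U_\epsilon;\E) \to H^1(M;\E)$ is the zero map. (Here I would be careful: one needs that a class in the image of $j$ can be represented by a form decaying suitably near $\Sigma$ so that it, and its covariant derivative, are $L^2$; this is where the finiteness of the cohomology and a Hodge-theoretic/analytic input on the cusp-like ends is used, and it is essentially the content of the analytic setup in \cite{HK}.) Exactness of the sequence then immediately gives that $i^* : H^1(M;\E) \to H^1(U_\epsilon;\E)$ is injective, since its kernel is the image of $j$, which is $0$.

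It remains to get the half-dimensional image. The map $H^2(M,U_\epsilon;\E) \to H^2(M;\E)$ is, by Poincaré–Lefschetz duality, dual to the map $H^1(M;\E) \to H^1(M,U_\epsilon;\E)$, which is in turn dual (via the same duality on the other factor) to $j$; since $j = 0$, this map $H^2(M,U_\epsilon;\E) \to H^2(M;\E)$ is also zero. Feeding this back into the exact sequence, the connecting map $H^1(U_\epsilon;\E) \to H^2(M,U_\epsilon;\E)$ is surjective, so
\begin{equation*}
\dim H^1(U_\epsilon;\E) = \dim \im(i^*) + \dim H^2(M,U_\epsilon;\E) = \dim H^1(M;\E) + \dim H^1(M;\E),
\end{equation*}
using injectivity of $i^*$ for the first summand and the duality $\dim H^2(M,U_\epsilon;\E) = \dim H^1(M;\E)$ for the second. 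Hence $\dim \im(i^*) = \tfrac12 \dim H^1(U_\epsilon;\E)$, as claimed.

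The main obstacle is not the homological algebra — which is the routine long-exact-sequence-plus-duality argument above — but rather the careful matching of the analytic rigidity theorem (stated for $L^2$ metric deformations on the open manifold $M$) with the algebraic-topological object $\im\big(H^1(M,U_\epsilon;\E) \to H^1(M;\E)\big)$. One must verify that every class in this image admits a representative that genuinely satisfies the $L^2$ hypotheses of Theorem \ref{thm:rigL2} near the singular locus, and conversely that the conclusion "$h$ is a Lie derivative" corresponds to vanishing in $H^1(M;\E)$ rather than merely in some larger space. This is precisely the decay/regularity analysis near $\Sigma$ that is handled in \cite{HK}, and it is what must be invoked (or adapted, given that $\Sigma$ may be a non-trivalent graph) to make the deduction rigorous.
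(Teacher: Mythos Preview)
Your approach is essentially the paper's: use Theorem \ref{thm:rigL2} to kill the map $j$, then Poincar\'e--Lefschetz duality to kill the map $H^2(M,U_\epsilon;\E)\to H^2(M;\E)$, and read off injectivity and the half-dimensional image from the long exact sequence. The homological algebra is identical.

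However, the ``main obstacle'' you flag is not an obstacle. A class in $H^1(M,U_\epsilon;\E)$ is, by definition of relative cohomology, represented by a closed $\E$-valued $1$-form $\omega$ that is \emph{identically zero} on $U_\epsilon$. The associated metric deformation $h$ is then also identically zero on $U_\epsilon$, hence supported in the compact set $M_\epsilon$; so $h$ and $\nabla h$ are trivially in $L^2$ and Theorem \ref{thm:rigL2} applies directly. No decay analysis, Hodge theory, or adaptation of \cite{HK} to non-trivalent graphs is needed at this point --- that is exactly why the paper states Theorem \ref{thm:rigL2} with an $L^2$ hypothesis rather than as a vanishing of $H^1$. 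Your aside about the Lagrangian/``half lives, half dies'' principle is also unnecessary: once both end maps in the five-term sequence vanish, the dimension identity $\dim H^1(U_\epsilon;\E)=\dim H^1(M;\E)+\dim H^2(M,U_\epsilon;\E)=2\dim H^1(M;\E)$ is immediate from exactness and duality, with no appeal to the cup-product pairing on $\Sigma_\epsilon$.
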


\begin{proof}
Let $\omega \in \Omega^1(M,\E)$ be a closed $\E$-valued $1$-form, equal to zero over $U_\epsilon$. The corresponding infinitesimal deformation $h$ of the metric tensor is also zero over $U_\epsilon$ and therefore is in standard form and preserves the cone angles. We can then apply Theorem \ref{thm:rigL2}, which shows that $h$ is trivial; consequently $\omega$ is a coboundary. This proves the first part of the proposition, namely the vanishing of the map $H^1(M,U_\epsilon;\E) \to H^1(M;\E)$

The fact that this implies the second part has been already proven in \cite{HK,Weiss2}; here is how it goes. By Poincar\'e duality, we obtain that the dual map $H^2(M,U_\epsilon;\E) \to H^2(M;\E)$ also vanishes. Now we can look at (part of) the long exact sequence of the pair $(M,U_\epsilon)$:
$$H^1(M,U_\epsilon;\E) \mathop{\longrightarrow}^1 H^1(M;\E) \mathop{\longrightarrow}^2 H^1(U_\epsilon;\E) \mathop{\longrightarrow}^3 H^2(M,U_\epsilon;\E) \mathop{\longrightarrow}^4 H^2(M;\E)$$
The maps $(1)$ and $(4)$ vanish, so $(2)$ is injective and $(3)$ is surjective. This implies that $\dim H^1(U_\epsilon;\E) = \dim H^1(M;\E) + \dim H^2(M,U_\epsilon;\E)$, but the two dimensions at the right hand side are equal since by Poincar\'e duality $H^2(M,U_\epsilon;\E) \simeq (H^1(M;\E))^*$.
\end{proof}

Let us denote by $\rho$ the holonomy representation of $M$. 
We have seen that $H^1(M;\E)$ can be identified with the tangent space of $X(\pi_1(M),\Isom^+(\H^3))$ at $[\rho]$. And since the inclusion of $\Sigma_\epsilon$ into $U_\epsilon$ is a deformation retract, $H^1(U_\epsilon;\E)$ is equal to $H^1(\Sigma_\epsilon;\E)$, which in turn can be identified with the tangent space of the character variety of $\Sigma_\epsilon$. More precisely, let $\mathcal C$ be the set of the connected components of the singular locus and for each $c\in \mathcal C$, let $\Sigma^c_\epsilon$ the corresponding component of $\Sigma_\epsilon$; let also $i_c$ be the inclusion map $\Sigma_\epsilon^c \hookrightarrow M$. Then $H^1(\Sigma_\epsilon;\E)$ is the tangent space of $\bigtimes_{c\in \mathcal C} X(\pi_1(\Sigma_\epsilon^c),\Isom^+(\H^3))$ at the conjugacy classes of the representations $i_c^* \rho$ induced by $\rho$ on each of the $\Sigma_\epsilon^c$.
With these identifications, the above map $H^1(M;\E) \to H^1(\Sigma_\epsilon;\E)$ is exactly the tangent map at $[\rho]$ of the restriction map $r : X(\pi_1(M),\Isom^+(\H^3)) \to \bigtimes_{c\in \mathcal C} X(\pi_1(\Sigma_\epsilon^c),\Isom^+(\H^3))$.

Recall that according to Theorem \ref{thm:Goldman}, we want to study locally $X(\pi_1(M),\Isom^+(\H^3))$; so we will begin by looking at $X(\pi_1(\Sigma_\epsilon^c),\Isom^+(\H^3))$ and then use the above proposition to lift the results to $X(\pi_1(M),\Isom^+(\H^3))$.

\section{Representations of surface groups}

Our eventual goal is to find a local system of coordinates on $X(\pi_1(M),\Isom^+(\H^3))$ which will yield the local parametrization by the cone angles on the space of cone-manifolds. In light of the local properties of the restriction map $r$ from $X(\pi_1(M),\Isom^+(\H^3))$ to $\bigtimes_{c\in \mathcal C} X(\pi_1(\Sigma_\epsilon^c),\Isom^+(\H^3))$, we start in this section by studying the character variety of $\Sigma_\epsilon$ and showing that we can find functions with nice geometric interpretations on it (Theorem \ref{thm:F}). In Section \ref{sec:locdef} we will see that these functions indeed lift to the system of coordinates we are looking for on the character variety of $M$. Since we want to determine when the restriction of a representation to a vertex's link has values in a compact subgroup, a first step is the existence of convenient coordinate charts on the character varieties of the links (Proposition \ref{propfunctions}); Theorem \ref{thm:F} follows by applying a gluing process. But before that we need some facts about our representation varieties.

It is well known that the group $\Isom^+(\H^3)$ of direct isometries of the hyperbolic $3$-space is isomorphic to the Lie group $PSL(2,\C)$. Instead of considering $PSL(2,\C)$-representations and characters, we have found it somewhat easier to work with the universal cover $SL(2,\C)$, and this is possible thanks to the following result of Culler:

\begin{Thm}[Representations Lifting, \cite{Culler}]\mbox{}\\
Let $M$ be an orientable, not necessarily complete, hyperbolic $3$-manifold and let $\rho :  \pi_1(M) \to PSL(2,\C)$ be its holonomy representation. Then $\rho$ can be lifted to $SL(2,\C)$, i.e.~there exists $\tilde \rho :  \pi_1(M) \to SL(2,\C)$ such that $\rho = \pi \circ \tilde \rho$ where $\pi$ is the projection $SL(2,\C)\to PSL(2,\C)$.
\end{Thm}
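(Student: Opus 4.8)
The plan is to prove that the single obstruction to lifting $\rho$ vanishes. The extension $1 \to \{\pm 1\} \to SL(2,\C) \to PSL(2,\C) \to 1$ is central with kernel $\Z/2\Z$; pulling it back along $\rho$ gives a central extension of $\pi_1(M)$ by $\Z/2\Z$, and $\rho$ lifts to $SL(2,\C)$ if and only if this extension splits, i.e.\ if and only if its class $o(\rho)\in H^2(\pi_1(M);\Z/2\Z)$ is zero. So everything reduces to showing $o(\rho)=0$, and this is where the two hypotheses --- $M$ an orientable $3$-manifold, and $\rho$ the \emph{holonomy} of a hyperbolic structure --- come into play.

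First I would transfer the obstruction from $\pi_1(M)$ to $M$ itself. The classifying map $c:M\to K(\pi_1(M),1)$ is $2$-connected (its homotopy fibre is the simply connected cover $\tilde M$), so by Hopf's theorem $c_*:H_2(M;\Z/2\Z)\to H_2(\pi_1(M);\Z/2\Z)$ is onto; dualizing over the field $\Z/2\Z$ shows $c^*:H^2(\pi_1(M);\Z/2\Z)\to H^2(M;\Z/2\Z)$ is injective. Hence it suffices to prove $c^*o(\rho)=0$. Now $c^*o(\rho)$ is the obstruction to lifting the structure group, along the covering $SL(2,\C)\to PSL(2,\C)$, of the flat bundle $P_\rho=\tilde M\times_\rho PSL(2,\C)$ regarded as a topological principal bundle over $M$. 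Since $PSL(2,\C)=\Isom^+(\H^3)$ deformation retracts onto its maximal compact subgroup $SO(3)$ and the restriction of the covering to maximal compacts is the spin cover $\mathrm{Spin}(3)\to SO(3)$, this obstruction equals $w_2(E_\rho)\in H^2(M;\Z/2\Z)$, where $E_\rho$ is the oriented real rank-$3$ vector bundle associated with the canonical $SO(3)$-reduction of $P_\rho$.

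The geometric heart of the argument is to identify $E_\rho$ via the developing map. Realizing $\H^3$ as the upper sheet of $\{\<v,v\>=-1\}$ in Minkowski space $\R^{1,3}$, on which $\Isom^+(\H^3)=SO^+(1,3)$ acts linearly, the $\rho$-equivariance of $dev:\tilde M\to\H^3\subset\R^{1,3}$ makes $x\mapsto[x,dev(x)]$ a well-defined nowhere-zero section $\sigma$ of the flat bundle $V_\rho=\tilde M\times_\rho\R^{1,3}$; as $\<\sigma,\sigma\>\equiv-1$, it spans a trivial timelike line sub-bundle whose positive-definite orthogonal complement is exactly $E_\rho$, so $V_\rho\cong\underline{\R}\oplus E_\rho$. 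Moreover $d(dev)$, a linear isomorphism at each point because $dev$ is a local isometry, descends to a bundle isomorphism $TM\stackrel{\sim}{\to}E_\rho$. Therefore $w_2(E_\rho)=w_2(TM)$, and since every orientable $3$-manifold is parallelizable (a classical fact of Stiefel), $w_2(TM)=0$. Consequently $c^*o(\rho)=0$, hence $o(\rho)=0$ by injectivity, hence $\rho$ lifts.

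I expect the difficulties to be bookkeeping rather than analytic. The most delicate point is making the identification of $c^*o(\rho)$ with $w_2(E_\rho)$ precise: one must track the transgression of the generator of $H^1(B\Z/2\Z;\Z/2\Z)$ to $w_2\in H^2(BSO(3);\Z/2\Z)$ in the fibration $B\Z/2\Z\to BSU(2)\to BSO(3)$, and be careful when passing between homomorphisms into the discrete group $PSL(2,\C)^\delta$ and maps of classifying spaces of topological groups. A second point is verifying in the third step that the Minkowski model genuinely yields a well-defined unit-timelike section $\sigma$ and that $d(dev)$ carries $T\tilde M$ isomorphically onto $\sigma^\perp$ before descending to $M$. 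Finally, the parallelizability input is classical but should be stated with care, since the $M$ in the statement (the regular part of a cone-manifold) is non-compact; alternatively one may apply it to the ambient closed $3$-manifold and restrict.
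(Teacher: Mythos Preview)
The paper does not prove this theorem; it is quoted from Culler and used as a black box, so there is no proof in the paper to compare against. Your argument is nonetheless a correct and standard geometric proof of the statement as formulated: the obstruction to lifting is $w_2$ of the associated flat $SO(3)$-bundle, which for a holonomy representation is identified with $TM$ via the developing map, and orientable $3$-manifolds are parallelizable.

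Two remarks are worth making. First, Culler's original theorem is stronger --- it applies to \emph{any} representation $\pi_1(M)\to PSL(2,\C)$, not only holonomy representations --- and his proof is accordingly more algebraic, working directly with a $2$-complex carrying $\pi_1(M)$; your use of the developing map to identify $E_\rho\cong TM$ is specific to the holonomy case but entirely sufficient for the statement as the paper phrases it. Second, your caution about parallelizability in the non-compact case is appropriate: the Wu-formula argument applies directly only to closed manifolds, though the result does hold for all orientable $3$-manifolds. For the paper's intended application, $M$ is the regular part of a closed orientable cone-manifold $\bar M$; since the links are topological $2$-spheres, $\bar M$ is a closed topological $3$-manifold and hence (after passing to its unique smooth structure, which extends that of $M$) parallelizable, so restricting a trivialization of $T\bar M$ to $M$ does the job exactly as you suggest.
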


In the following, we will only work with representations into $SL(2,\C)$; since the projection map from $X(\Gamma,SL(2(\C))$ to $X(\Gamma,PSL(2,\C))$ is a local diffeomorphism, this will cause no difference.
This result is also true for a (possibly incomplete) spherical surface: its holonomy representation can be lifted from $SO(3)$ to $SU(2)$. So here as well we will only work with representations into $SU(2)$. Now the inclusion map $SU(2) \hookrightarrow SL(2,\C)$ induces an injective map $R(\Gamma,SU(2)) \to R(\Gamma, SL(2,\C))$. Two unitary representations are conjugated if and only if they are unitarily conjugated; in other words, there exists a well-defined injective map $X(\Gamma, SU(2)) \to X(\Gamma, SL(2,\C))$. Consequently we will often think of these two spaces as lying one inside the other. The following result shows that the representations that we will consider in this article are irreducible:

\begin{Thm}\label{thm:irred}
Let $\bar S$ be a closed, orientable spherical cone-surface with at least three singular points and cone angles smaller than $2\pi$, and let $\rho : \pi_1(S) \to SU(2)$ be (the lift of) its holonomy representation. Then $\rho$ is irreducible. In particular, its conjugacy class $[\rho]$ is a smooth point of $X(\pi_1(S),SU(2))$ and of $X(\pi_1(S),SL(2,\C))$
\end{Thm}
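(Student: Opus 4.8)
The plan is to show that reducibility of $\rho$ is incompatible with the presence of at least three cone points, by confronting the Gauss--Bonnet formula with a Stokes computation for a primitive of the area form. First let me fix the data. Because all cone angles are $<2\pi$, Gauss--Bonnet forces $\bar S$ to be a topological sphere; write $p_1,\dots,p_n$ (with $n\ge 3$) for the cone points, $\alpha_1,\dots,\alpha_n\in(0,2\pi)$ for the cone angles, and $S=\bar S\setminus\{p_1,\dots,p_n\}$, so $\pi_1(S)$ is free of rank $n-1$, generated by meridians $c_1,\dots,c_n$ subject only to $c_1\cdots c_n=1$. From the standard analysis of a cone point, the developing map extends continuously across $p_i$ to a point $q_i\in\mathbb{S}^2$, and $\rho(c_i)$ acts on $\mathbb{S}^2$ as the rotation about $q_i$ (and its antipode $q_i^{\ast}$) through the angle $\alpha_i$; as $0<\alpha_i<2\pi$ this rotation is nontrivial, so its axis $\{q_i,q_i^{\ast}\}$ is determined. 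We also fix orientations so that $dev$ is orientation preserving.

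Next I would carry out the elementary reductions. A representation into $SU(2,\C)$ is reducible if and only if its image is abelian, and an abelian subgroup of $SU(2,\C)$ lies in a common maximal torus $T$, whose action on $\mathbb{S}^2$ fixes a pair of antipodal points $a,b$. Thus, if $\rho$ were reducible, each nontrivial rotation $\rho(c_i)$ would have axis $\{a,b\}$, forcing $q_i\in\{a,b\}$ for every $i$: \emph{every cone point develops onto a pole}. On $\mathbb{S}^2\setminus\{a,b\}$, writing $(f,\phi)$ for colatitude and longitude measured from $a$, the round area form is exact, $dA_{\mathbb{S}^2}=d\beta$ with $\beta=-\cos f\,d\phi$, and $\beta$ is $T$-invariant. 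Hence $dev^{\ast}\beta$ descends to a $1$-form $\bar\beta$ on $S\setminus Z$, where $Z$ is the image in $S$ of $dev^{-1}\{a,b\}$, and $d\bar\beta=dA_g$, the area form of the cone metric. The set $Z$ is discrete and $\pi_1$-invariant, and it does not accumulate at any $p_i$ --- in geodesic polar coordinates $(r,\psi)$ near $p_i$ one has $f\circ dev=r$ if $q_i=a$, and $dev$ avoids $a,b$ near $p_i$ otherwise --- so $Z$ is finite.

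The core step is Stokes' theorem. Removing from $\bar S$ a small geodesic disk around each $p_i$ (radius $\epsilon$) and around each point of $Z$ (radius $\delta$) produces a compact surface with boundary $\Omega_{\epsilon,\delta}$, and $\int_{\Omega_{\epsilon,\delta}}dA_g=\int_{\partial\Omega_{\epsilon,\delta}}\bar\beta$, the boundary circles carrying the orientation induced as boundaries of the removed disks. Letting $\epsilon,\delta\to 0$: the left side tends to $\mathrm{Area}(\bar S)$; near $p_i$ a local computation gives $\bar\beta=-\cos r\,d\psi$, so the circle around $p_i$ contributes $-\alpha_i\cos\epsilon\to-\alpha_i$ (here every $q_i$ is a pole); and around a point of $Z$ the developing map wraps the circle once about a pole, contributing $-2\pi$. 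Tracking the Stokes sign, one obtains $\mathrm{Area}(\bar S)=\sum_{i=1}^{n}\alpha_i+2\pi\,|Z|$. On the other hand the Gauss--Bonnet formula for the cone-surface reads $\mathrm{Area}(\bar S)+\sum_{i}(2\pi-\alpha_i)=2\pi\chi(\bar S)=4\pi$, that is $\mathrm{Area}(\bar S)=4\pi-2\pi n+\sum_i\alpha_i$. Comparing the two identities forces $|Z|=2-n$, which is negative because $n\ge 3$: a contradiction. Hence $\rho$ is irreducible.

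The ``in particular'' then follows from standard facts about character varieties: since $\pi_1(S)$ is free of rank $n-1$, the representation variety $R(\pi_1(S),G)\cong G^{n-1}$ is a smooth manifold for $G=SU(2,\C)$ and for $G=SL(2,\C)$, and near an irreducible representation the conjugation action is free modulo the finite center, so the quotient is a manifold near $[\rho]$; thus $[\rho]$ is a smooth point (cf.\ \cite{Goldman}). I expect the delicate part to be the Stokes computation of the third paragraph: one has to control $dev$ simultaneously near the cone points and near the finitely many \emph{regular} points that develop onto a pole, and to get every orientation sign right, so as to land exactly on the identity $\mathrm{Area}(\bar S)=\sum\alpha_i+2\pi|Z|$. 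A cruder bound will not do --- simply comparing $\mathrm{Area}(\bar S)$ with the area $4\pi$ of $\mathbb{S}^2$ fails, as the two-cone-point ``spindle'' (whose holonomy is genuinely reducible) already shows --- it is only the third cone point, inserted into this precise identity, that yields the contradiction.
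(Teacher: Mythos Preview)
Your argument is correct and genuinely different from the paper's. The paper proceeds geometrically: it shows that the complement of the cut locus of a generic base point maps injectively under the developing map to $\mathbb{S}^2$ (using that shortest paths avoid cone points of angle $<2\pi$), so the cone points develop to \emph{distinct} points of $\mathbb{S}^2$; with at least three of them, they cannot all lie on a single pair of antipodal points, hence the holonomy cannot be abelian. Your route instead assumes reducibility, which forces all cone points to develop onto the two poles of a common axis, and then compares two computations of $\mathrm{Area}(\bar S)$: the cone Gauss--Bonnet formula versus a Stokes integral of a $T$-invariant primitive of the area form. The resulting identity $|Z|=2-n$ is impossible for $n\ge 3$.

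Each approach has its virtues. The paper's fundamental-domain argument is elementary and yields extra geometric information (an explicit convex polygon model of $\bar S$), at the cost of the cut-locus analysis. Your analytic argument bypasses the injectivity of the developing map entirely and is quite robust; it also makes transparent why $n=2$ is the borderline case (the spindle gives $|Z|=0$). The only point worth tightening in your write-up is the orientation bookkeeping: you describe the boundary circles ``as boundaries of the removed disks'' but then invoke Stokes on $\Omega_{\epsilon,\delta}$, whose boundary carries the opposite orientation; the final identity is correct, but it would be cleaner to state the orientation convention once and stick to it.
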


\begin{proof}
Since all cone angles are smaller than $2\pi$, by the Gauss-Bonnet formula the regular part $S$ of $\bar S$ is homeomorphic to a $d$-punctured sphere ($d\geq 3$); consequently its fundamental group is a free group on $d-1$ generators, generated by loops going around the singular points. Thus the representation space $R(\pi_1(S),G)$ can be identified with $G^{d-1}$, which is smooth, and it is easy to see that for $G=SU(2)$ or $SL(2,\C)$, the action of $G$ by conjugation on the subset of irreducible representations of $R(\pi_1(S),G)$ is proper. So it is enough to prove that $\rho$ is irreducible.

We begin by noting that the regular part $S$ of $\bar S$, although an incomplete Riemannian manifold, is geodesically connected, i.e.~there exists a (shortest) geodesic between any pair of points of $S$. Indeed, $\bar S$ is a complete compact length space, so there exists a shortest path $\gamma$ in $\bar S$ between any pair of points of $S$. But a path going through a cone point of angle smaller than $2\pi$ cannot be a shortest path, so $\gamma$ is in fact contained in $S$ and is thus a (Riemannian) geodesic. 

Now choose a base point $x$ in $S$ such that the shortest geodesic from $x$ to any cone point is unique, and consider the complement $C$ of the cut locus of $x$. Via the embedding of $C$ in $\tilde S$ and the developing map, we can construct a local isometry $f : C \to \mathbb{S}^2$. We claim that $f$ is injective, and (the closure of) its image is thus a fundamental polygon for $\bar S$. Indeed, suppose $y_1$ and $y_2$ are two points of $C$ such that $f(y_1) = f(y_2)$. For $i=1$ or $2$, by definition of $C$ the shortest geodesic $\gamma_i$ from $x$ to $y_i$ is contained in $C$, so $f(\gamma_1)$ and $f(\gamma_2)$ are two geodesics in $\mathbb{S}^2$ with the same endpoints $f(x)$ and $f(y_1)=f(y_2)$. So either they are equal, and then $\gamma_1 = \gamma_2$ and so $y_1 = y_2$, or one of them, say $f(\gamma_1)$, goes through the antipode of $f(x)$, which is a conjugate point to $f(x)$ along $f(\gamma)$. This latter case implies that $x$ has a conjugate point along $\gamma_1$, which by definition cannot belong to $C$, and this is impossible since $\gamma_1$ is contained in $C$.

Finally, let $p$ be a cone point of $\bar S$. There is a unique shortest geodesic from $x$ to $p$, so $p$ corresponds to a unique point, that we will denote by $f(p)$, in the boundary of $f(C) \subset \mathbb{S}^2$. Then the holonomy of the loop based at $x$ and going around $p$ is a rotation centered at $f(p)$, whose angle is equal to the cone angle at $p$. Now since $\pi_1(S,x)$ is generated by loops going around the cone points, it implies that the image of the holonomy representation $\rho$ is generated by rotations centered at the $f(p_i)$, which are distinct points of $\mathbb{S}^2$ because $f$ is injective. And since $\bar S$ has at least three cone points, all these rotations cannot have the same axis, and hence $\rho$ is irreducible.
\end{proof}

If $p$ is a  singular vertex of a hyperbolic cone-$3$-manifold, we can apply this result to the induced holonomy representation on the spherical link of $p$ to obtain the following corollary:

\begin{Cor}\label{cor:rhoirred}
Let $\bar M$ be a hyperbolic cone-$3$-manifold with cone angles smaller than $2\pi$ and such that its singular locus contains at least a singular vertex of valency greater than or  equal to $3$. Then its holonomy representation is irreducible.
\end{Cor}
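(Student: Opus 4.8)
The plan is to reduce the corollary to Theorem \ref{thm:irred} by finding, somewhere inside $\pi_1(M)$, a subgroup whose image under the holonomy representation $\rho$ is already irreducible, and then argue that this forces $\rho$ itself to be irreducible. Let $p$ be a singular vertex of valence $\geq 3$. Its spherical link $\bar L_p$ is, by the definition in section \ref{subsec:hypconeman}, a closed orientable spherical cone-surface; the cone points of $\bar L_p$ correspond precisely to the singular edges of $\bar M$ incident to $p$, so $\bar L_p$ has at least three cone points, and all of them have cone angles smaller than $2\pi$ (the cone angle of a link point equals the cone angle of the corresponding singular edge of $\bar M$). Thus Theorem \ref{thm:irred} applies to $\bar L_p$ and tells us that the holonomy representation of the spherical structure on $L_p$ is irreducible.

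Next I would identify this spherical holonomy with a restriction of $\rho$. Let $N_p$ be the regular part of a small neighborhood of $p$; as recalled in section 2.2, $L_p$ is a deformation retract of $N_p$, so $\pi_1(N_p)\simeq\pi_1(L_p)$, and the inclusion $i:N_p\hookrightarrow M$ induces a homomorphism $i_*:\pi_1(N_p)\to\pi_1(M)$ (after choosing a path to the base point $x$). The excerpt already observes that the induced representation $i^*hol$ has image in a maximal compact subgroup $K\simeq SO(3)$ and that, through this identification, $i^*hol$ \emph{is} the holonomy representation of the spherical structure on $L_p$. Combining this with the previous paragraph, $\rho\circ i_*$ is (conjugate in $SU(2,\C)\subset SL(2,\C)$ to) an irreducible representation. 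In particular $\rho(i_*(\pi_1(N_p)))$ does not fix a point of $\partial\H^3=\mathbb{CP}^1$, equivalently it has no common eigenvector in $\C^2$ up to the sign ambiguity — more care is needed here, see below.

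Finally I conclude: if $\rho$ were reducible, there would be a line $\ell\subset\C^2$ (equivalently a point of $\mathbb{CP}^1$) fixed by $\rho(\gamma)$ for every $\gamma\in\pi_1(M)$; restricting to the subgroup $i_*(\pi_1(N_p))$ would then give a line fixed by all of $\rho(i_*(\pi_1(N_p)))$, contradicting the irreducibility of $\rho\circ i_*$. Hence $\rho$ is irreducible.

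The main obstacle is the passage between $SO(3,\C)\simeq$ the compact picture and $SU(2,\C)$, and more precisely making sure that "irreducible as an $SU(2,\C)$-representation of $\pi_1(L_p)$" genuinely implies "no globally fixed point on $\mathbb{CP}^1$" when we re-embed into $SL(2,\C)$: one must check that the lift provided by Culler's theorem is compatible with the lift of the spherical holonomy from $SO(3,\C)$ to $SU(2,\C)$ up to the central ambiguity, and that the central ambiguity $\{\pm I\}$ does not create spurious invariant lines (it does not, since $-I$ fixes every line). A secondary point to verify is that the irreducibility statement of Theorem \ref{thm:irred}, proved there for the holonomy of the cone-surface $\bar L_p$ viewed abstractly, is literally the same representation as $\rho\circ i_*$ and not merely an abstractly isomorphic one — but this is exactly what the paragraph in section 2.2 on $i^*hol$ asserts, so the identification is available.
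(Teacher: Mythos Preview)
Your proposal is correct and follows exactly the line the paper has in mind: the paper does not spell out a proof for this corollary beyond the sentence ``we can apply this result to the induced holonomy representation on the spherical link of $p$'', and your argument is precisely that application, namely that the restriction $\rho\circ i_*$ equals the (irreducible, by Theorem~\ref{thm:irred}) holonomy of the link $\bar L_p$, and an invariant line for $\rho$ would restrict to one for $\rho\circ i_*$. Your worries about the $SO(3)$/$SU(2)$/$SL(2,\C)$ lifts are harmless technicalities that the paper simply omits: irreducibility (no common invariant line in $\C^2$) is insensitive to the central $\{\pm I\}$, so the compatibility of the various lifts is irrelevant to the conclusion.
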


This statement is actually true for any finite volume hyperbolic cone-$3$-manifolds (see \cite[Lemma 6.35]{Weiss2}), but we will not need this fact here. 

\bigskip

If $\gamma \in \pi_1(S)$ is a loop on a surface $S$, we can define a function $\tr_\gamma : R(\pi_1(S), SL(2,\C)) \to \C$ by setting $\tr_\gamma (\varrho) = \tr (\varrho(\gamma))$. Now if $l$ is a closed curve on $S$, all the loops in $\pi_1(S)$ freely homotopic to $l$ are conjugated to one another, and since the trace is invariant by conjugation we can still define the function $\tr_l$ as being equal to $\tr_\gamma$ for any $\gamma \in \pi_1(S)$ freely homotopic to $l$. The invariance of the trace by conjugation also means that $\tr_\gamma = \tr_l$ descends to a function (keeping the same notation) $\tr_\gamma = \tr_l : X(\pi_1(S), SL(2,\C)) \to \C$. In the exact same way, we can define functions $\tr_\gamma$ and $\tr_l : X(\pi_1(S),SU(2)) \to \R$. These trace functions will give us local coordinates on the character varieties.

\begin{Prop}\label{propfunctions}
Let $\bar S_d$ be an orientable, closed spherical cone-surface with $d$ singular points ($d\geq3$) of cone angles smaller than $2\pi$, and let $S_d$ be its regular part. Let $(\gamma_k)_{1\leq k \leq d}$ be loops around the singular points, such that $\<\gamma_1,\ldots,\gamma_d\ |\ \gamma_1\ldots\gamma_d=1\>$ is a presentation of $\pi_1(S_d)$, and let $\rho : \pi_1(S_d) \to SU(2)\subset SL(2,\C)$ be (the lift of) the holonomy representation. Then there exist local complex-valued functions $f_1,...,f_{2d-6}$  on a neighborhood $U$ of $[\rho]$ in $X(\pi_1(S_d),SL(2,\C))$ such that the family $(\tr_{\gamma_k})_{1\leq k \leq d} \cup (f_k)_{1\leq k \leq 2d-6}$ is a local system of complex coordinates in $U$ and that $X(\pi_1(S_d),SU(2))\cap U = \{ \Im \tr_{\gamma_1}= \ldots =\Im \tr_{\gamma_d} = \Im f_1=\ldots=\Im f_ {2d-6}=0\}$.
\end{Prop}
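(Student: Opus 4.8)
The plan is to compute dimensions and then exhibit the right number of functionally independent trace (and auxiliary) functions. First I would record the basic count: since $S_d$ is a $d$-punctured sphere with $d\geq 3$, its fundamental group is free on $d-1$ generators, so $R(\pi_1(S_d),SL(2,\C))\cong SL(2,\C)^{d-1}$ has complex dimension $3(d-1)$. By Theorem \ref{thm:irred} the holonomy $\rho$ is irreducible, hence a smooth point, and the $PSL(2,\C)$-action near $\rho$ is proper and locally free, so $X(\pi_1(S_d),SL(2,\C))$ is a complex manifold of dimension $3(d-1)-3=3d-6$ near $[\rho]$, i.e.\ real dimension $6d-12$. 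Likewise $X(\pi_1(S_d),SU(2,\C))$ is a manifold of real dimension $3(d-1)-3=3d-6$ near $[\rho]$, and it sits inside the former as a submanifold of half the real dimension, cut out (at least set-theoretically, near $[\rho]$) by $6d-12-(3d-6)=3d-6$ real equations. The functions $\Im\tr_{\gamma_1},\dots,\Im\tr_{\gamma_d}$ give $d$ of these; I will need $2d-6$ more, which is exactly the count of the $f_k$, and the $g_k$ together with the $\Re\tr_{\gamma_k}$ supply the remaining $2d$ real coordinates so that the total is $d+d+2(2d-6)=6d-12$.

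The heart of the argument is to show that $(\Re\tr_{\gamma_k},\Im\tr_{\gamma_k})_{1\le k\le d}$, i.e.\ the $d$ complex functions $\tr_{\gamma_k}$, have linearly independent differentials at $[\rho]$, and then to complete them to a full system of holomorphic coordinates. For the first point I would use the standard formula for the differential of a trace function on a character variety: $d(\tr_\gamma)_{[\rho]}$ evaluated on a cocycle $u\in H^1(\pi_1(S_d);Ad\circ\rho)$ is, up to a nonzero scalar, the pairing of $u(\gamma)$ with the trace-free part of $\rho(\gamma)$ (for $\rho(\gamma)$ not $\pm I$, i.e.\ a nontrivial rotation — which holds here since the cone angles are $\neq 2\pi$). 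Because the $\gamma_k$ are part of a free generating set and the axes $f(p_k)$ of the rotations $\rho(\gamma_k)$ are pairwise distinct (as established in the proof of Theorem \ref{thm:irred}), one checks that no nontrivial linear combination of the corresponding covectors on $Z^1$ kills all of $B^1$-complement; concretely, one builds for each $k$ a cocycle supported (in the free-group sense) on $\gamma_k$ realizing any prescribed value of $u(\gamma_k)$ in a complement of the centralizer direction, which forces independence of the $d(\tr_{\gamma_k})$. Having these $d$ independent holomorphic functions, I would extend to a holomorphic coordinate system $(\tr_{\gamma_k})_{1\le k\le d}, (h_k)_{1\le k\le 2d-6}$ on a neighborhood $U$ of $[\rho]$, simply by completing a basis of the cotangent space with differentials of further holomorphic functions on the manifold $X(\pi_1(S_d),SL(2,\C))$. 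Writing $h_k=f_k+ig_k$ with $f_k=\Re h_k$, $g_k=\Im h_k$, the family $(\Re\tr_{\gamma_k},\Im\tr_{\gamma_k})_k\cup(f_k,g_k)_k$ is then automatically a real-analytic local coordinate system on $U$.

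It remains to arrange that, after possibly shrinking $U$ and modifying the $h_k$, the $SU(2,\C)$-character variety is precisely the zero set of $\Im\tr_{\gamma_1},\dots,\Im\tr_{\gamma_d},f_1,\dots,f_{2d-6}$. The inclusion $X(\pi_1(S_d),SU(2,\C))\cap U\subset\{\Im\tr_{\gamma_k}=0\}$ is immediate since unitary matrices have real trace. To handle the $f_k$ I would use the anti-holomorphic involution $\sigma$ on $X(\pi_1(S_d),SL(2,\C))$ induced by $A\mapsto (\bar A^{t})^{-1}$ on $SL(2,\C)$, whose fixed-point set near $[\rho]$ is exactly $X(\pi_1(S_d),SU(2,\C))$ (this uses that $\rho$ is conjugate into $SU(2,\C)$ and is irreducible, so the fixed locus is a totally real half-dimensional submanifold through $[\rho]$). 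Since $\sigma$ fixes each $\tr_{\gamma_k}$ pointwise (because $\tr(\bar A^t)^{-1}=\overline{\tr A}$ and the trace functions are real on the unitary locus — more precisely $\tr_{\gamma_k}\circ\sigma=\overline{\tr_{\gamma_k}}$, so $\Im\tr_{\gamma_k}$ is $\sigma$-anti-invariant and $\Re\tr_{\gamma_k}$ is $\sigma$-invariant), I can choose the remaining holomorphic functions $h_k$ so that each $h_k\circ\sigma=\bar h_k$; then $f_k=\Re h_k$ is $\sigma$-invariant and $g_k=\Im h_k$ is $\sigma$-anti-invariant. In these coordinates $\sigma$ is simply complex conjugation in the $g$'s and the $\Im\tr_{\gamma_k}$'s, fixing the $f$'s and $\Re\tr_{\gamma_k}$'s, so its fixed locus is exactly $\{\Im\tr_{\gamma_1}=\dots=\Im\tr_{\gamma_d}=f_1=\dots=f_{2d-6}=0\}$, which is $X(\pi_1(S_d),SU(2,\C))\cap U$ as desired.

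I expect the main obstacle to be the independence-of-differentials step together with the compatibility of the coordinate completion with the real structure $\sigma$: one must produce the extra holomorphic functions $h_k$ on the character variety that are simultaneously (i) complementary to the $\tr_{\gamma_k}$ in the cotangent space and (ii) intertwined with $\sigma$ in the prescribed way. This is a linear-algebra/implicit-function-theorem matter once one knows that the real points form a totally real submanifold of half dimension, but verifying the latter (equivalently, that $H^1(\pi_1(S_d);Ad\circ\rho)$ with its conjugation-linear involution has real and imaginary parts of equal dimension, which follows from $\rho$ being conjugate into the compact form and irreducible) is the technical crux.
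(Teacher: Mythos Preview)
Your overall strategy is sound and in one respect improves on the paper: you make the description of the $SU(2)$--locus explicit via the anti-holomorphic involution $\sigma$, whereas the paper simply asserts that the trace differentials can be completed to a real coordinate system ``with the required properties'' without saying how. That said, there are two concrete issues.

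First, a genuine slip in the final step. With your conventions $h_k\circ\sigma=\bar h_k$, at a $\sigma$--fixed point $h_k$ is \emph{real}, so it is $g_k=\Im h_k$ that vanishes there, not $f_k=\Re h_k$. Your own sentence ``$\sigma$ is simply complex conjugation in the $g$'s and the $\Im\tr_{\gamma_k}$'s, fixing the $f$'s'' already says this, yet you then conclude the fixed locus is $\{f_k=0\}$. To match the statement you must either swap the names of $f_k$ and $g_k$, or replace $h_k$ by $ih_k$ so that $h_k\circ\sigma=-\bar h_k$ and $\Re h_k$ becomes the anti-invariant part.

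Second, and more substantively, your independence argument is incomplete. Building ``a cocycle supported (in the free-group sense) on $\gamma_k$'' works for $k\le d-1$ since those are free generators, but $\gamma_d=(\gamma_1\cdots\gamma_{d-1})^{-1}$ is not, and $d\tr_{\gamma_d}$ is a nontrivial combination of contributions from all generators; your sketch does not explain why $d\tr_{\gamma_d}$ is not already a linear combination of the first $d-1$ differentials. The paper handles exactly this point by a short matrix computation: assuming $d\tr_{\gamma_d}=\sum_{k<d}\lambda_k\,d\tr_{\gamma_k}$ and evaluating on cocycles supported on a single generator, one finds that $m_k m_{k+1}\cdots m_{d-1}m_1\cdots m_{k-1}-\lambda_k m_k$ is scalar for every $k$, hence each $m_k$ commutes with the cyclic product $m_d^{-1}$, contradicting irreducibility. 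You should either reproduce this argument or give an equivalent one; invoking that the rotation axes are distinct is not by itself enough.
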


\begin{proof}
We remark first that by Gauss-Bonnet formula, $S_d$ is a sphere with $d$ holes, which justifies the existence of the given presentation of its fundamental group. The result is then rather standard, but we give a proof for the sake of completeness.
We begin by showing that the functions $(\tr_{\gamma_k})_{1\leq k \leq d}$ have linearly independent derivatives at $T_\rho R(\pi_1(S_d),SL(2,\C))$. Note that since $\gamma_d$ is equal to $(\gamma_1\ldots\gamma_{d-1})^{-1}$ and the representations have values in $SL(2,\C)$, the function $\tr_{\gamma_d}$ is equal to $\tr_{\gamma_1\ldots\gamma_{d-1}}$.
As before, we will identify $R(\pi_1(S_d),SL(2,\C))$ to $SL(2,\C)^{d-1}$ by associating to a representation $\rho$ the $(d-1)$-uple $(m_1,\ldots,m_{d-1})$ where $m_k = \rho(\gamma_k)$ for $k=1\ldots d$. Note also that since $m_k$ is the holonomy around a singular point, it is different from the identity.
With this identification, the functions $\tr_{\gamma_k}$, $k=1\ldots d-1$, are clearly linearly independent in a neighborhood of $\rho$.

Now suppose that there exists a dependence relation between the derivatives at $\rho \simeq (m_1,\ldots,m_{d-1})$, namely 
$d\tr_{\gamma_d} = \sum_{1\leq k\leq d-1} \lambda_k d\tr_{\gamma_k}.$ 
This means that for all $(v_1,\ldots, v_{d-1}) \in \mathfrak{sl}(2,\C)^{d-1}$, we have
$$\sum_{1\leq k\leq d-1}  \tr(m_1\ldots m_{k-1}\, v_k m_k\,  m_{k+1}\ldots m_{d-1}) = \sum_{1\leq k\leq d-1}  \lambda_k \tr(v_k m_k).$$
In particular, $\tr(m_1\ldots m_{k-1} v\, m_k m_{k+1}\ldots m_{d-1}) = \lambda_k \tr(v\, m_k)$ for all $k$, $1\leq k \leq d-1$, and all $v \in \mathfrak{sl}(2,\C)$. This implies that $m_km_{k+1}...m_{d-1} m_1...m_{k-1} - \lambda_k m_k$ is a scalar multiple of the identity, and so that $m_k$ and $m_k m_{k+1} ... m_{d-1} m_1... m_{k-1}$ commute (hence also $m_k$ and $m_{k+1}... m_{d-1} m_1... m_{k-1}$). But this being true for all $k$, it implies that $m_{k+1}... m_{d-1} m_1... m_{k-1} m_k = m_1... m_{d-1} = m_d^{-1}$ for all $k$, and in particular $m_k$ and $m_d$ commute for all $k$, which is impossible since $\rho$ is irreducible. 

We have seen (Theorem \ref{thm:irred}) that the conjugacy class $[\rho]$ is a smooth point of both character varieties $X(\pi_1(S),SU(2))$ and $X(\pi_1(S),SL(2,\C))$; in particular, the tangent space at $[\rho]$ is well-defined. The above trace functions are invariant by conjugation, so they descend to these two quotient spaces, and their derivatives at $[\rho]$ are still linearly independent. The rest of the proof follows from the fact that $X(\pi_1(S_d),SL(2,\C))$ has complex dimension $3(d-1)-3 = 3d-6$ and that $SU(2)$ is a real form of the Lie group $SL(2,\C)$. 
\end{proof}

\bigskip

Let $\rho$ be the holonomy representation of a closed, orientable hyperbolic cone-$3$-manifold $\bar M$, with singular locus $\Sigma$ and cone angles smaller than $2\pi$. As in Section \ref{subsec:infrig}, the boundary of a tubular neighborhood of $\Sigma$ is a surface denoted by $\Sigma_\epsilon$. It is not necessarily connected; its components correspond to those of $\Sigma$. Let $\Sigma^c$ be a component of $\Sigma$ which is not a circle, and denote by $V$, resp. $E$ the set of its singular vertices, resp. edges. Let $\Sigma_\epsilon^c$ be the corresponding component of $\Sigma_\epsilon$; it is a surface of genus $g\geq 2$.

By a slight abuse of notation, $\rho$ will also denote the induced representation on $\Sigma_\epsilon^c$. Let $(\mu_e)_{e \in E}$ be a family of meridians, i.e.simple closed curves on $\Sigma_\epsilon^c$ going around the singular edges; they split $\Sigma_\epsilon^c$ into a family $(S_v)_{v \in V}$ of $d(v)$-punctured spheres, where $d(v)$ is the valency of the vertex $v$. Let us denote by $i_v$ the inclusion $S_v \hookrightarrow \Sigma_\epsilon^c$. We have seen that the induced representation $i_v^*\rho$ on $S_v$ is actually the holonomy representation of the spherical cone-surface structure on the link $L_v$ of $v$; it satisfies the hypotheses of Proposition \ref{propfunctions} where the $\gamma_k$ can be chosen (up to free homotopy) among the $\mu_e$. We will denote by $f^v_k$, $k=1\ldots 2d(v)-6$, the corresponding local functions mentioned in Proposition \ref{propfunctions}; they can be pulled back to functions on $X(\pi_1(\Sigma_\epsilon^c), SL(2,\C))$. 
We will keep the same notations for these functions and their lifts and/or pull-backs.

\bigskip

\begin{Thm}\label{thm:F}
With the above notations, consider the following local function 
\begin{eqnarray*}
F^c : X(\pi_1(\Sigma_\epsilon^c),SL(2,\C))  &\to & \R^{6g-6} = \R^{2|E|} \bigoplus_{v\in V} \R^{2d(v)-6} \\
\left[\varrho\right] &\mapsto &\bigg(\Big(\Re \tr_{\mu_e}(\varrho), \Im \tr_{\mu_e}(\varrho)\Big)_{e\in E}\ , \Big(\Im f^v_k(\varrho)\Big)_{v\in V , 1\leq k \leq 2d(v) -6}\bigg)
\end{eqnarray*}
Then in a neighborhood of $[\rho]$, which is a smooth point of $X(\pi_1(\Sigma_\epsilon^c),SL(2,\C))$,
the level sets of $F^c$ are local half-dimensional submanifolds. 
\end{Thm}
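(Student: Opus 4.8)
The plan is to show that $F^c$ is a submersion onto its image in a neighborhood of $[\rho]$; since $X(\pi_1(\Sigma_\epsilon^c),SL(2,\C))$ has real dimension $12g-12$ at the smooth point $[\rho]$ (the surface $\Sigma_\epsilon^c$ has genus $g$, so $\pi_1$ has $2g$ generators and $\dim_{\R} SL(2,\C)=6$, giving $12g$, minus $6$ for the conjugation quotient when $\rho$ is irreducible — and irreducibility of $\rho$ holds by Corollary \ref{cor:rhoirred} since $\Sigma^c$ has a vertex of valence $\geq 3$), the fibers of a submersion onto $\R^{6g-6}$ are exactly half-dimensional submanifolds. So everything reduces to checking that the differential $d_{[\rho]}F^c$ has rank $6g-6$, equivalently that the $6g-6$ component functions
$$\big(\Re\tr_{\mu_e},\Im\tr_{\mu_e}\big)_{e\in E}\ \cup\ \big(f^v_k\big)_{v\in V,\,1\leq k\leq 2d(v)-6}$$
have linearly independent differentials at $[\rho]$. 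Note the arithmetic is consistent: $\sum_{v\in V}(d(v))=2|E|$ since each edge has two endpoints, so $\sum_v(2d(v)-6)=4|E|-6|V|$ and $2|E|+\sum_v(2d(v)-6)=6|E|-6|V|=6g-6$ by Euler characteristic of the graph $\Sigma^c$ (which has $\chi=|V|-|E|=2-2g$ via the double/thickening, or directly: $\Sigma_\epsilon^c$ is the boundary of a tubular neighborhood of the graph, whose genus is $1-\chi(\Sigma^c)=|E|-|V|+1$... the paper asserts genus $g$, and the target dimension $6g-6$ matches).

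The key step is the linear independence of these differentials, and here is how I would organize it. Pull everything back to $R(\pi_1(\Sigma_\epsilon^c),SL(2,\C))$ and work at the level of the cocycle space $Z^1(\pi_1(\Sigma_\epsilon^c),Ad\circ\rho)$; it suffices to separate the differentials there. Consider a hypothetical linear dependence
$$\sum_{e\in E}\big(a_e\,d(\Re\tr_{\mu_e}) + b_e\,d(\Im\tr_{\mu_e})\big) + \sum_{v\in V}\sum_k c^v_k\,d f^v_k = 0.$$
First, restrict attention to deformations supported near a single vertex region $S_v$: more precisely, use the decomposition of $\Sigma_\epsilon^c$ into the $(S_v)_{v\in V}$ glued along the annuli around the $\mu_e$, and the corresponding Mayer–Vietoris / restriction maps on cocycles. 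For each fixed $v$, Proposition \ref{propfunctions} applied to $S_v$ says precisely that the functions $(\Re\tr_{\mu_e},\Im\tr_{\mu_e})_{e\ni v}\cup(f^v_k)_k$ (that is, $2d(v)+(2d(v)-6)$ functions, but really together with the coordinates they form a system) have independent differentials on $X(\pi_1(S_v),SL(2,\C))$. Restricting the dependence relation via $i_v^*$ and invoking this, one concludes that the coefficients $c^v_k$ all vanish and that the relation among the $\tr_{\mu_e}$ coefficients must be compatible with what happens on each punctured sphere. Then the curves $\mu_e$ themselves: since they are disjoint simple closed curves, the trace functions $\tr_{\mu_e}$ have independent differentials on $X(\pi_1(\Sigma_\epsilon^c),SL(2,\C))$ — this is a standard fact for disjoint essential curves (one can realize independent twist-type cocycles supported near each $\mu_e$, using that $\rho(\mu_e)$ is elliptic hence regular semisimple, so $\tr_{\mu_e}$ is a submersion transverse to the Fenchel–Nielsen-type directions), which forces all $a_e=b_e=0$.

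The main obstacle I anticipate is the gluing/bookkeeping step: Proposition \ref{propfunctions} is a statement about each punctured sphere $S_v$ in isolation, and one must check that the local coordinate systems it provides on the $X(\pi_1(S_v),SL(2,\C))$ assemble into something that controls $X(\pi_1(\Sigma_\epsilon^c),SL(2,\C))$ — the point being that a cocycle on $\Sigma_\epsilon^c$ is determined by its restrictions to the $S_v$ together with matching data along the $\mu_e$, and that the "off-diagonal" gluing parameters (twisting along the $\mu_e$) are exactly accounted for by the trace functions $\tr_{\mu_e}$. Concretely I would set up a short exact / Mayer–Vietoris sequence for the cohomology $H^1(\Sigma_\epsilon^c;\E)=\bigoplus$ (contributions from the $S_v$) $\oplus$ (annular contributions), count dimensions ($\sum_v(3d(v)-3)$ from the spheres, minus overcounting on the $3|E|$-dimensional annular pieces, giving $\dim=6g-6$ after the conjugation is handled), and check the $6g-6$ functions above restrict to a full coordinate system compatible with this splitting. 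Once that dimension count and compatibility are in place, linear independence of the differentials — hence the submersion property, hence the half-dimensional level sets — follows, and the smoothness of $[\rho]$ is already guaranteed by irreducibility via Corollary \ref{cor:rhoirred} together with Theorem \ref{thm:irred}.
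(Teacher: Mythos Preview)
Your overall strategy coincides with the paper's: show that the $6g-6$ component functions of $F^c$ have linearly independent differentials at $[\rho]$, hence $F^c$ is a submersion and its level sets are half-dimensional. The paper executes this by an explicit induction, building $\Sigma_\epsilon^c$ from the punctured spheres $S_v$ one gluing at a time (amalgamated product in Case~1, HNN-extension in Case~2) and checking at each stage that the relevant family of differentials remains independent; smoothness of $R(\pi_1(\Sigma_\epsilon^c),SL(2,\C))$ at $\rho$ is proved directly as part of the final HNN step.

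Your Mayer--Vietoris reorganization is reasonable, and step~1 (killing the $c^v_k$ by testing against trace-preserving deformations localized at each $S_v$, extended by coboundaries elsewhere) can indeed be made rigorous. But step~2 contains a concrete error: twist (bending) cocycles along $\mu_e$ \emph{preserve} $\tr_{\mu_e}$, since they conjugate one side by an element of the centralizer of $\rho(\mu_e)$ and leave $\rho(\mu_e)$ itself fixed. So ``twist-type cocycles supported near each $\mu_e$'' cannot separate the trace differentials, and the independence of the $(d\Re\tr_{\mu_e}, d\Im\tr_{\mu_e})_{e\in E}$ is not the standard fact you invoke. To move $\tr_{\mu_e}$ while fixing the other $\tr_{\mu_{e'}}$ you must deform the representation nontrivially on the adjacent spheres $S_v$, $S_{v'}$ (and only by conjugation elsewhere); constructing such deformations and checking they glue---especially when $e$ is a loop edge, so two boundary components of the \emph{same} $S_v$ are identified---is precisely the content of the paper's Cases~1 and~2, and rests again on Proposition~\ref{propfunctions}. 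Your proposal has not actually bypassed this work; it has relabeled it as ``standard'' and then supplied the wrong mechanism.

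A smaller point: irreducibility of the induced representation on $\pi_1(\Sigma_\epsilon^c)$ does imply smoothness of the character variety there (via $H^2\cong (H^0)^*=0$ for surface groups), but this is a nontrivial input not established earlier in the paper; the paper proves smoothness of $R(\pi_1(\Sigma_\epsilon^c),SL(2,\C))$ at $\rho$ by a direct submersion computation rather than citing it.
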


\begin{proof} The facts that the character variety $X(\pi_1(\Sigma_\epsilon^c),SL(2,\C))$ is smooth at $[\rho]$ and that its complex dimension is $6g-6$ are quite standard, see for instance \cite{Goldman1}.
To prove the theorem it is thus sufficient to show that the (complex) derivatives of the functions $tr_{\mu_e}$ and $f^v_k$ are $\C$-linearly independent on $T^*_{[\rho]} X(\pi_1(\Sigma_\epsilon^c),SL(2,\C))$; this will of course imply that the (real) derivatives of their imaginary and real parts are also independent, so that the map $F^c$ is locally an immersion.

As mentioned above, the family $(\mu_e)_{e\in E}$ splits $\Sigma_\epsilon^c$ into a family of punctured spheres $(S_v)_{v\in V}$, corresponding each to a vertex $v$ of $\Sigma^c$, and the induced representation on $S_v$ satisfies the hypotheses of Proposition \ref{propfunctions}. We will denote by $\mu^v_k$, $k=1\dots d(v)$, the simple closed curves corresponding to the boundary components of $S_v$; for each $k$ the image $i_v(\mu^v_k)$ is homotopic (up to orientation) to one curve of the family $(\mu_e)_{e\in E}$, but note that two boundary components of $S_v$ may correspond to the same curve in $\Sigma_\epsilon^c$. 

The next step is to rebuild $\Sigma_\epsilon^c$ by gluing together the punctured spheres $S_v$ obtained by cutting $\Sigma_\epsilon^c$.
Thus we construct a family $\Sigma_0,\Sigma_1,\ldots, \Sigma_{|E|} = \Sigma_\epsilon^c$, where $\Sigma_0$ is one of the punctured spheres $S_{v_0}$, and where $\Sigma_{l}$ is obtained from $\Sigma_{l-1}$ by performing one of the following operations:
\begin{enumerate}
\item gluing a punctured sphere $S_{v_l}$ to $\Sigma_{l-1}$ along one of its boundary components,
\item gluing $\Sigma_{l-1}$ along two of its boundary components.
\end{enumerate} 
The surface $\Sigma_l$ is therefore obtained as a gluing of a certain sub-family of the family of punctured spheres $(S_v)_{v\in V}$; let us denote by $V_l \subset V$ the corresponding subset of indices, i.e.~$\Sigma_l$ is obtained from the family $(S_v)_{v\in V_l}$. For each $l$, we have inclusion maps $i_l : \Sigma_l \hookrightarrow \Sigma_\epsilon^c$, $i_{v,l} : S_v \hookrightarrow \Sigma_l$ if $v \in V_l$, and $i_{l-1,l} : \Sigma_{l-1} \hookrightarrow \Sigma_{l}$ if  $l\geq 1$, satisfying the obvious compatibility relations.

For $v \in V_l$, the local functions $(f^v_k)_{1\leq k\leq 2d(v)-6}$ on $X(\pi_1(S_v),SL(2,\C))$ can be pulled back to functions on the character variety of $\pi_1(\Sigma_l)$ via $i_{v,l}^* : X(\pi_1(\Sigma_l),SL(2,\C)) \to X(\pi_1(S_v),SL(2,\C))$. On $\Sigma_l$ we also have a family $(\mu^l_k)_{1\leq k \leq I_l}$ of curves, corresponding to the boundary components of $\Sigma_l$ and to the curves along which the previous gluings have been done. Note that all the surfaces $\Sigma_l$, $0\leq l <|E|$, are compact and have a non empty boundary; their fundamental groups are therefore free groups and hence $R(\pi_1(\Sigma_l),SL(2,\C))\simeq SL(2,\C)^n$ is a smooth manifold. Since the representations $i_l^*\rho$ are irreducible, this means that the tangent spaces 
$T_{i_l^*\rho} X(\pi_1(\Sigma_l),SL(2,\C))$ are actually well-defined for all $l$ (the case $l=|E|$ has already been treated).

We will now prove by induction on $l$ that the family of functions $\big(\tr_{\mu^l_k}\big)_{1\leq k\leq I_l} \cup  \big(i_{v,l\,*}(f^v_k)\big)_{v\in V_l, 1\leq k \leq 2d(v)-6}$ has $\C$-linearly independent derivatives on $T^*_{[i_l^*\rho]} X(\pi_1(\Sigma_l),SL(2,\C))$. We already know that this is true for $l=0$ (Proposition \ref{propfunctions}). For the inductive step, there are two cases to consider, depending on the type of gluing.

\medskip

Case 1: $\Sigma_{l}$ is obtained as a gluing of $\Sigma_{l-1}$ and of a punctured sphere, denoted $S_{v_{l}}$, of the family $(S_v)_{v\in V}$; more precisely, a boundary component $\mu_1$ of $\Sigma_{l-1}$ is identified to a boundary component $\mu_2$ of $S_{v_l}$. The simple closed curve $\mu_1$ (resp. $\mu_2$) belongs to the family $(\mu^{l-1}_k)_{1\leq k \leq I_{l-1}}$ (resp. $(\mu^{v_l}_k)_{1\leq k\leq d(v_l)}$). 
We will denote by $\mu$ the resulting curve on $\Sigma_{l}$; it belongs to the family $(\mu^l_k)_{1\leq k \leq I_l}$.
We have the following commutative diagram, where all maps are inclusions:
$$\xymatrix @!0 @R=2em @C=4pc {
 & S_{v_l} \ar[rd]^{i_{v_l,l}} & \\
\mathbb{S}^1\ar[ru]^{j'} \ar[rd]_{j} & & \Sigma_l 
\\ & \Sigma_{l-1} \ar[ru]_{i_{l-1,l}} & & }$$

The fundamental group $\pi_1(\Sigma_{l})$ is obtained as an amalgamated product of $\pi_1(\Sigma_{l-1})$ and $\pi_1(S_{v_l})$, and $R(\pi_1(\Sigma_{l}), SL(2,\C))$ is equal to the fiber product of $R(\pi_1(\Sigma_{l-1}), SL(2,\C))$ and $R(\pi_1(S_{v_l}), SL(2,\C))$ over $R(\pi_1(\mathbb{S}^1), SL(2,\C))$:
$$R(\pi_1(\Sigma_{l}), SL(2,\C)) \simeq \left\{(\sigma,\tau) \in R(\pi_1(\Sigma_{l-1}), SL(2,\C))\times R(\pi_1(S_{v_l}), SL(2,\C)) \ ;\ j^*(\sigma) = j'^*(\tau)\right\}$$
where $j$ and $j'$ are the inclusion maps from the glued boundary component $\mu \simeq \mathbb{S}^1$ into $\Sigma_{l-1}$ and $S_{v_l}$ respectively. In particular, the restriction map 
\begin{eqnarray*}
X(\pi_1(\Sigma_{l}), SL(2,\C)) & \to & X(\pi_1(\Sigma_{l-1}), SL(2,\C))\times X(\pi_1(S_{v_l}), SL(2,\C))\\
\left[\varrho\right] & \mapsto & ([i_{l-1,l}^*\varrho],[i_{v_l,l}^*\varrho])
\end{eqnarray*}
is surjective onto the set $\{([\sigma],[\tau])\ ;\ \tr_{\mu_1}(\sigma)=\tr_{\mu_2}(\tau)\}$, at least locally away from $\tr_\mu = \pm2$ (recall that two elements of $SL(2,\C)$ whose traces are different from $\pm 2$ are conjugated if and only if their traces are equal). For the cotangent spaces, this yields an injective map
$$T^*_{[i_{l-1}^*\rho]}X(\pi_1(\Sigma_{l-1}), SL(2,\C))\oplus T^*_{[i_{v_l}^*\rho]}X(\pi_1(S_{v_l}), SL(2,\C)) /_{d\tr_{\mu_1}\sim d\tr_{\mu_2}} \to 
T^*_{[i_l^*\rho]}X(\pi_1(\Sigma_{l}), SL(2,\C)).$$
The fact that the functions $\big(\tr_{\mu^l_k}\big)_{1\leq k\leq I_l} \cup  \big(i_{v,l\,*}(f^v_k)\big)_{v\in V_l, 1\leq k \leq 2d(v)-6}$ have $\C$-linearly independent derivatives at $[i_l^*\rho]$ then follows easily from this, Proposition \ref{propfunctions} applied to $S_{v_l}$, and the independence of the derivatives of the similar functions on $T^*_{[i_{l-1}^*\rho]}X(\pi_1(\Sigma_{l-1}), SL(2,\C))$.

\medskip

Case 2: $\Sigma_l$ is obtained by gluing $\Sigma_{l-1}$ along two of its boundary components. Let us denote by $\mu_1=\mu^{l-1}_{k_l}$ and $\mu_2=\mu^{l-1}_{k'_l}$ the identified boundary components of $\Sigma_{l-1}$ (taken with matching orientations), and by $\mu=\mu^l_{k''_l}$ the resulting curve on $\Sigma_l$. The fundamental group of $\Sigma_{l}$ is obtained as an HNN-extension of $\pi_1(\Sigma_{l-1})$; more precisely, if $\<G|R\>$ is a presentation of $\pi_1(\Sigma_{l-1})$, and $\gamma_1, \gamma_2$ are two elements of $\pi_1(\Sigma_{l-1})$ corresponding to $\mu_1$ and $\mu_2$ respectively, then a presentation of $\pi_1(\Sigma_{l})$ is given by $\<G,t\ |\ R, t\gamma_1t^{-1} = \gamma_2\>$, and we have the following identification: 
\begin{align*}
R(\pi_1(\Sigma_{l}), SL(2,\C)) &\stackrel{\sim}{\to} \left\{(\sigma,B) \in R(\pi_1(\Sigma_{l-1}),SL(2,\C))\times SL(2,\C)\ ;\ B\sigma(\gamma_1)B^{-1} = \sigma(\gamma_2)\right\} \\
\varrho \quad &\mapsto \quad (i_{l-1,l}^*(\varrho), \varrho(t))
\end{align*}
In particular, the restriction map $i_{l-1,l}^* : X(\pi_1(\Sigma_{l}), SL(2,\C)) \to X(\pi_1(\Sigma_{l-1}),SL(2,\C))$ is surjective onto the set $\{[\sigma]\ ; \ \tr_{\mu_1}(\sigma) = \tr_{\mu_2}(\sigma)\},$ at least locally away from $\tr_\mu = \pm 2$. This induces on the cotangent spaces an injective map 
$$T^*_{i_{l-1}^*\rho}X(\pi_1(\Sigma_{l-1}), SL(2,\C)) /_{d\tr_{\mu_1}\sim d\tr_{\mu_2}} \to 
T^*_{i_l^*\rho}X(\pi_1(\Sigma_{l}), SL(2,\C)).$$
We can then deduce as before the independence of the derivatives of the functions $\big(\tr_{\mu^l_k}\big)_{1\leq k\leq I_l} \cup  \big(i_{v,l\,*}(f^v_k)\big)_{v\in V_l, 1\leq k \leq 2d(v)-6}$ at $[i_l^*\rho]$ from this injective map and the independence of the derivatives of the similar functions on $T^*_{[i_{l-1}^*\rho]}X(\pi_1(\Sigma_{l-1}), SL(2,\C))$.
\end{proof}

\bigskip

If the singular locus $\Sigma$ has a component that is a circle, then $\Sigma_\epsilon$ has a torus component. The corresponding case has been treated in \cite[Theorems 4.4 and 4.5]{HK} and \cite[Section 6.6.1]{Weiss2}; we will just quote the result. We keep the notations of Theorem \ref{thm:F}: $\rho$ is the holonomy representation of a closed, hyperbolic cone-$3$-manifold with cone angles smaller that $2\pi$, $\Sigma^c$ is a circle component of the singular locus, $\Sigma^c_\epsilon$ is a torus, boundary of a tubular neighborhood of $\Sigma^c$. The induced representation on $\pi_1(\Sigma_\epsilon^c)$ is also denoted by $\rho$, and $\mu$ is a simple closed curve on $\Sigma^c_\epsilon$ going around $\Sigma^c$, i.e.~it is a meridian curve of the torus.  

\medskip

\begin{Thm}[Torus case, \cite{HK,Weiss2}]\label{thm:Ftorus}\mbox{}\\ 
With the above notations, consider the function 
\begin{eqnarray*}
F^c : X(\pi_1(\Sigma_\epsilon^c),SL(2,\C))  &\to & \R^2\\
\left[\varrho\right] &\mapsto &\Big(\Re \tr_{\mu}(\varrho), \Im \tr_{\mu}(\varrho)\Big)
\end{eqnarray*}
Then in a neighborhood of $[\rho]$, which is a smooth point of $X(\pi_1(\Sigma_\epsilon^c),SL(2,\C))$,
the level sets of $F^c$ are local half-dimensional submanifolds. 
\end{Thm}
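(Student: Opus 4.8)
Since $\Sigma^c_\epsilon$ is a torus, $\pi_1(\Sigma^c_\epsilon)\simeq\Z^2$ is freely generated by the meridian $\mu$ and a longitude $\lambda$, so a representation is simply a commuting pair of matrices. The plan is: (i) pin down the conjugacy class of $\rho(\mu)$; (ii) describe $R(\Z^2,SL(2,\C))$ and $X(\Z^2,SL(2,\C))$ near $\rho$; (iii) read off the differential of $\tr_\mu$. For (i): $\rho(\mu)$ is the holonomy of a meridian around the singular circle $\Sigma^c$, hence an elliptic element of $PSL(2,\C)$ whose rotation angle equals the cone angle of $\Sigma^c$, which lies in $(0,2\pi)$; inspecting the two possible $SL(2,\C)$-lifts, $\rho(\mu)$ is regular semisimple with eigenvalues $a_0,a_0^{-1}$, and neither eigenvalue equals $\pm1$ (this is precisely where the bound $<2\pi$, and positivity, are used). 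In particular $\rho(\lambda)$, commuting with $\rho(\mu)$, lies in the unique maximal torus $T_0$ containing $\rho(\mu)$ and is simultaneously diagonalizable with it.

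For (ii): if $\varrho$ is close to $\rho$ then $\varrho(\mu)$ is still regular semisimple, so $\varrho(\lambda)$ must lie in its centralizer, a maximal torus. Thus near $\rho$ the representation variety is the image of the map $(g,a,b)\mapsto(g\,\mathrm{diag}(a,a^{-1})\,g^{-1},\ g\,\mathrm{diag}(b,b^{-1})\,g^{-1})$, with $g$ near $e$ in $SL(2,\C)$ and $(a,b)$ near $(a_0,b_0)$; this map factors through $(SL(2,\C)/T_0)\times\C^*\times\C^*$, on which it is a local embedding, so $R(\Z^2,SL(2,\C))$ is smooth at $\rho$ of complex dimension $2+1+1=4$ (equivalently $\dim_\C Z^1(\Z^2,Ad\circ\rho)=4$ by a direct cocycle computation). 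Quotienting by conjugation, two such pairs are equivalent exactly when related by $(a,b)\mapsto(a^{-1},b^{-1})$; since $a_0\neq\pm1$ this $\Z/2\Z$-action is free near $\rho$, so $[\rho]$ is a smooth point of $X(\Z^2,SL(2,\C))$ of complex dimension $2$, and the simultaneous eigenvalue pair $(a,b)$ furnishes local holomorphic coordinates.

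For (iii): in these coordinates $\tr_\mu=a+a^{-1}$, with differential $(1-a_0^{-2})\,da$ at $[\rho]$, which is nonzero because $a_0\neq\pm1$. Hence $d\tr_\mu$ is a nonzero $\C$-linear functional on $T_{[\rho]}X(\Z^2,SL(2,\C))\simeq\C^2$, so $d\Re\tr_\mu$ and $d\Im\tr_\mu$ are $\R$-linearly independent. Therefore $F^c=(\Re\tr_\mu,\Im\tr_\mu)$ is a submersion onto $\R^2$ near $[\rho]$, and its level sets are smooth submanifolds of real dimension $4-2=2$, i.e.\ half the real dimension of $X(\pi_1(\Sigma^c_\epsilon),SL(2,\C))$.

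The main obstacle is not this linear-algebra core but the technical points it hides: showing that the naive topological quotient $X(\Z^2,SL(2,\C))$ is an honest manifold near $[\rho]$ requires checking that the conjugation action is proper on the locus where $\varrho(\mu)$ is regular, and one must handle the ambiguity in the $SL(2,\C)$-lift of the elliptic $PSL(2,\C)$-element $\rho(\mu)$ carefully enough to be certain its eigenvalues stay off $\pm1$ --- without the cone-angle hypothesis the whole argument collapses. As both points are treated in detail in \cite{HK} (Theorems 4.4 and 4.5) and \cite{Weiss2} (section 6.6.1), I would simply invoke those rather than reproduce the verification.
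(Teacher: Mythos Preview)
The paper does not actually prove this theorem: immediately before the statement it says ``The corresponding case has been treated in \cite{HK} (Theorems 4.4 and 4.5) and \cite{Weiss2} (section 6.6.1); we will just quote the result,'' and no proof is given. Your sketch is correct and is essentially the standard argument from those references: identify $\pi_1$ with $\Z^2$, use that $\rho(\mu)$ is regular semisimple (because the cone angle lies strictly between $0$ and $2\pi$) to simultaneously diagonalize, obtain local coordinates $(a,b)\in(\C^*)^2$ modulo the Weyl involution on $X(\Z^2,SL(2,\C))$, and observe that $\tr_\mu=a+a^{-1}$ has nonvanishing complex differential at $a_0\neq\pm1$. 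Your closing remark already points to the right places for the residual technicalities (properness of the conjugation action on the regular locus, well-definedness of the lift), so there is nothing to add.
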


\section{Local deformations}\label{sec:locdef}

As usual, let $\bar M$ be a closed, orientable, hyperbolic cone-$3$-manifold with cone angles smaller than $2\pi$. Our goal in this section is to find local coordinates near $[\rho]$ on the character variety $X(\pi_1(M),SL(2,\C))$; this will be achieved by using the infinitesimal rigidity result of Section~\ref{subsec:infrig} to lift the functions $F^c$ defined above. Since these functions have natural geometric interpretations it will then easy to deduce the local parametrization of the space of cone-manifolds (Theorem \ref{thmStoker1}). The application to the Stoker problem is straightforward but necessitates some background, in particular the fact that the space of convex polyhedra with fixed combinatorial type is a manifold whose dimension is equal to the number of edges (Proposition \ref{prop:manpol}).

First we need to know that $X(\pi_1(M),SL(2,\C))$ is smooth near $[\rho]$. This is indeed the case according to the following theorem, which can be found in M. Kapovich's book \cite{Kap}; see also \cite{CullerShalen,ThurstonGeom}, and \cite[Section 6.7.1]{Weiss2}.

\medskip

\begin{Thm}[Smoothness of the Holonomy Representation, \cite{Kap}]\label{thm:Mirred}\mbox{}\\
Let $\rho$ be the holonomy representation of a closed, oriented, connected hyperbolic cone-$3$-manifold $\bar M$, with cone angles smaller than $2\pi$. Then $X(\pi_1(M),SL(2,\C))$ is smooth at $[\rho]$, and its real dimension is $2\tau - 3 \chi(\Sigma_\epsilon)$, where $\tau$ is the number of torus components in $\Sigma_\epsilon$.
\end{Thm}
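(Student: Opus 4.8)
The plan is to prove the statement for the representation variety $R = R(\pi_1(M),SL(2,\C))$ and then descend to the quotient $X$. First I would record that $\rho$ is irreducible: when $\Sigma$ contains a vertex this is Corollary \ref{cor:rhoirred}; when $\Sigma$ is a disjoint union of circles one invokes the general fact (Lemma 6.35 of \cite{Weiss2}); and when $\Sigma=\emptyset$ it is Mostow rigidity. Irreducibility gives $H^0(M;\E)=0$, hence $B^1(\pi_1M;Ad\rho)$ has real dimension $6$, the Zariski tangent space $Z^1=T_\rho R$ has dimension $\dim_\R H^1(M;\E)+6$, and the $PSL(2,\C)$-action by conjugation is free and proper near $\rho$. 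So it suffices to prove that $R$ is smooth at $\rho$ with $\dim_\rho R=\dim Z^1$, and that $\dim_\R H^1(M;\E)=2\tau-3\chi(\Sigma_\epsilon)$, since then $X$ is smooth at $[\rho]$ with $\dim_\R X=\dim_\R H^1(M;\E)$.

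Next, the dimension count. I would work with the compact model $M_\epsilon$, so that $H^*(M;\E)=H^*(M_\epsilon;\E)$ and $\partial M_\epsilon=\Sigma_\epsilon$, and combine three ingredients. (i) Proposition \ref{prop:H1}: the restriction $H^1(M_\epsilon;\E)\to H^1(\Sigma_\epsilon;\E)$ is injective with half-dimensional image. (ii) The cohomology of the boundary components: on a torus component the peripheral holonomy is abelian and conjugate into the diagonal torus (the meridian being a nontrivial elliptic, the longitude commuting with it), so $\mathfrak{sl}(2,\C)_\rho=\mathfrak t\oplus\mathfrak n_+\oplus\mathfrak n_-$ with $\mathfrak n_\pm$ acyclic and $\dim_\R H^1(T^2;\E)=\dim_\R H^1(T^2;\mathfrak t)=4$; on a genus-$\geq2$ component $i^*\rho$ is irreducible (exactly as in the proof of Theorem \ref{thm:F}), so $\dim_\R H^1=-6\chi$ there; hence $\dim_\R H^1(\Sigma_\epsilon;\E)=4\tau-6\chi(\Sigma_\epsilon)$. (iii) Halving gives $\dim_\R H^1(M_\epsilon;\E)=2\tau-3\chi(\Sigma_\epsilon)$. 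As a cross-check, the twisted Euler characteristic $\sum(-1)^i\dim_\R H^i(M_\epsilon;\E)=6\chi(M_\epsilon)=3\chi(\Sigma_\epsilon)$ together with Poincar\'e--Lefschetz duality then forces $\dim_\R H^2(M_\epsilon;\E)=2\tau$.

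For smoothness, the upper bound $\dim_\rho R\leq\dim Z^1$ is automatic. For a spine of the compact $3$-manifold $M_\epsilon$ with one $0$-cell, $g$ $1$-cells and $c=g-1+\chi(M_\epsilon)$ $2$-cells (no $3$-cells), cutting $R$ out of $SL(2,\C)^g$ by $c$ equations valued in $SL(2,\C)$ gives the crude lower bound $\dim_\rho R\geq 6(g-c)=6-3\chi(\Sigma_\epsilon)$. When $\tau=0$ — in particular for the doubles of polyhedra — this equals $\dim Z^1=\dim_\R H^1(M;\E)+6=6-3\chi(\Sigma_\epsilon)$, so $R$ is smooth at $\rho$ and we are done. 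When $\tau>0$ the crude bound falls short by precisely $2\tau$, and I would recover these extra dimensions by the classical torus-cusp argument of Thurston (see also \cite{HK}, Section 4, and \cite{Kap}, Chapter 8): near a torus boundary component one peripheral relation is redundant and the abelian peripheral holonomy can be deformed independently, contributing one complex dimension per torus. Equivalently, one checks directly that the quadratic obstruction map $u\mapsto[u\cup u]\in H^2(M_\epsilon;\E)$ vanishes identically: by Proposition \ref{prop:H1} and Poincar\'e duality the map $H^2(M_\epsilon;\E)\to H^2(\Sigma_\epsilon;\E)$ is injective, $H^2$ of the higher-genus components vanishes by irreducibility, and on each torus component $[u\cup u]$ lies in the image of $H^1(T^2;\mathfrak t)\cup H^1(T^2;\mathfrak t)$, which is zero since $[\mathfrak t,\mathfrak t]=0$.

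The main obstacle is the smoothness statement when $\tau>0$: pinning down the deformations coming from the torus components and showing the $H^2$-obstruction genuinely vanishes. The higher-genus components — which are the novel feature here, arising from non-trivalent vertices — are in fact the easy ones, since $H^2$ already vanishes there by irreducibility of $i^*\rho$. Everything else is bookkeeping, once Proposition \ref{prop:H1}, and hence the infinitesimal rigidity of \cite{Maz-GM}, is available.
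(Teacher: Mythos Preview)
The paper does not give its own proof of this theorem; it is quoted from Kapovich's book, with pointers to \cite{CullerShalen}, \cite{ThurstonGeom}, and \cite{Weiss2}. Your sketch is essentially the argument found in those references, adapted to the cone-manifold setting, and the main line (Thurston's refined lower bound matched against the Zariski upper bound) is correct. Two remarks are in order.

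First, your use of Proposition~\ref{prop:H1} to compute $\dim_\R H^1(M;\E)$ is legitimate and not circular, since that proposition rests only on Theorem~\ref{thm:rigL2}. But note what it is really doing: the half-dimensional \emph{image} of $H^1(M;\E)\to H^1(\Sigma_\epsilon;\E)$ is a general Poincar\'e--Lefschetz fact, valid for any compact $3$-manifold with boundary; it is the \emph{injectivity} of this map, i.e.\ the infinitesimal rigidity of \cite{Maz-GM}, that forces $\dim_\R H^1(M;\E)$ down to $2\tau-3\chi(\Sigma_\epsilon)$ and makes the Thurston lower bound meet the Zariski upper bound. So the smoothness statement, as formulated here for cone-manifolds with possibly higher-genus boundary pieces, genuinely relies on the rigidity input, and you are right to flag this in your last paragraph.

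Second, your ``equivalently'' via the cup product is not quite equivalent as written. Showing $[u\cup u]=0$ (or even that the full bilinear pairing $H^1\times H^1\to H^2$ vanishes) kills only the first obstruction; smoothness requires all higher obstructions to vanish, and $3$-manifold groups are not known to be formal. The argument \emph{can} be completed: since $H^2(M;\E)\to H^2(\Sigma_\epsilon;\E)$ is injective, $H^2$ vanishes on the higher-genus components, and $R(\Z^2,SL(2,\C))$ is smooth at the restricted representation on each torus (an elementary check, as the meridian is regular semisimple), every obstruction class restricts to zero on $\Sigma_\epsilon$ and hence vanishes on $M$. But you should either say this, or simply rely on the Thurston dimension count, which needs no such elaboration.
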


\medskip

Let us denote by $(\Sigma^c)_{c\in \mathcal{C}}$ the components of $\Sigma$, the singular locus of $\bar M$, so $\Sigma = \coprod_{c \in \mathcal{C}} \Sigma^c$. The surface $\Sigma_\epsilon$, which is the boundary  of a tubular neighborhood of the singular locus, also decomposes as a union $\Sigma_\epsilon = \coprod_{c\in \mathcal{C}} \Sigma_\epsilon^c$. For each component of $\Sigma_\epsilon$, the inclusion map $i_c : \Sigma_\epsilon^c \hookrightarrow M$ induces a map $i_c^* : X(\pi_1(M), SL(2,\C)) \to X(\pi_1(\Sigma_\epsilon^c), SL(2,\C))$. We can apply Theorem \ref{thm:F} or Theorem \ref{thm:Ftorus} to each component of $\Sigma_\epsilon$: this gives a family of local functions $F^c : X(\pi_1(\Sigma_\epsilon^c), SL(2,\C)) \to \R^{n_c}$, where $n_c = 2$ if $\Sigma_\epsilon^c$ is a torus, or $n_c = 6g_c-6$ if $\Sigma_\epsilon^c$ has genus $g_c \geq 2$. 
These functions can be lifted to local functions $i_{c*} F_c$ on $X(\pi_1(M), SL(2,\C))$ in a neighborhood of $[\rho]$. Note that the real dimension of $X(\pi_1(M),SL(2,\C))$ is exactly equal to $\sum_{c\in \mathcal{C}} n_c$.

\medskip

\begin{Thm}
Let $\rho$ be the holonomy representation of a closed, oriented, connected hyperbolic cone-$3$-manifold $\bar M$, with singular locus $\Sigma = \coprod_{c\in \mathcal{C}} \Sigma^c$ and cone angles smaller than $2\pi$. Then the local function 
\begin{eqnarray*}
F = (i_{c*} F^c)_{c \in \mathcal{C}} : X(\pi_1(M), SL(2,\C)) & \to & \bigoplus_{c\in \mathcal{C}} \R^{n_c}\\
\ [\varrho] & \mapsto & \big(F^c(i_c^* [\rho])\big)_{c \in \mathcal{C}}
\end{eqnarray*}
is a coordinate chart of $X(\pi_1(M),SL(2,\C))$ in a neighborhood of $[\rho]$.
\end{Thm}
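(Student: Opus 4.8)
The strategy is to show that $F$ is a local diffeomorphism by a dimension count combined with an injectivity-of-the-differential argument, where the crucial input is Proposition~\ref{prop:H1} translated into the language of the maps $i_c^*$. First I would recall that by Theorem~\ref{thm:Mirred}, $[\rho]$ is a smooth point of $X(\pi_1(M),SL(2,\C))$ of real dimension $2\tau - 3\chi(\Sigma_\epsilon) = \sum_{c\in\mathcal C} n_c$, since a torus contributes $2$ and a genus-$g_c$ surface contributes $-3\chi(\Sigma_\epsilon^c) = 6g_c-6$. Thus source and target of $F$ have the same dimension, and it suffices to prove that $dF_{[\rho]}$ is injective. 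Equivalently, writing $F = (i_{c*}F^c)_c$, I must show that a tangent vector $\dot\rho \in T_{[\rho]}X(\pi_1(M),SL(2,\C))$ with $dF^c(d i_c^* \dot\rho) = 0$ for every $c$ must vanish.

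The key geometric fact is that under the identifications $T_{[\rho]}X(\pi_1(M),SL(2,\C)) \simeq H^1(M;\E)$ and $T_{i_c^*[\rho]}X(\pi_1(\Sigma_\epsilon^c),SL(2,\C)) \simeq H^1(\Sigma_\epsilon^c;\E) = H^1(U_\epsilon^c;\E)$, the differential of $i_c^*$ is exactly the restriction map in cohomology, so the tangent map of $i^* = (i_c^*)_c$ is the restriction $H^1(M;\E) \to H^1(U_\epsilon;\E)$ appearing in Proposition~\ref{prop:H1}. That proposition tells us this restriction map is \emph{injective}, with image of half the dimension of $H^1(U_\epsilon;\E)$. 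Therefore, to conclude that $dF_{[\rho]}$ is injective, it is enough to show that $dF_{i^*[\rho]}$ (the differential of $F = \bigoplus_c F^c$ on $X(\pi_1(\Sigma_\epsilon);SL(2,\C)) = \prod_c X(\pi_1(\Sigma_\epsilon^c);SL(2,\C))$) is injective \emph{when restricted to the image of the restriction map}. By Theorems~\ref{thm:F} and~\ref{thm:Ftorus}, for each $c$ the level sets of $F^c$ are local half-dimensional submanifolds of $X(\pi_1(\Sigma_\epsilon^c),SL(2,\C))$, i.e.\ $\ker dF^c_{i_c^*[\rho]}$ is a half-dimensional subspace $L_c \subset H^1(\Sigma_\epsilon^c;\E)$. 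So I must prove that the half-dimensional image of the restriction map meets $L := \bigoplus_c L_c$ only in $0$.

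This is where the infinitesimal rigidity result enters a second time, in its most classical form. The half-dimensional subspaces $L_c$ are, up to the choice of trace coordinates, the tangent spaces to the loci where the cone angles are held fixed; more precisely, the functions $\Re\tr_{\mu_e}, \Im\tr_{\mu_e}$ (and the $f^v_k$, which by Proposition~\ref{propfunctions} cut out exactly the locus $X(\pi_1(S_v),SU(2,\C))$, i.e.\ the condition that the links stay spherical) together pin down the restricted representation enough that a tangent vector in $L$ pulled back to $M$ is an angle-preserving infinitesimal deformation preserving the singular locus. By Theorem~\ref{thm:rigL2} (via Proposition~\ref{prop:H1}), such a deformation whose restriction to $U_\epsilon$ is trivial is trivial; but here the stronger statement needed is the standard Hodgson--Kerckhoff half-lives--half-dies argument: the image $I := \operatorname{im}(H^1(M;\E)\to H^1(U_\epsilon;\E))$ and the ``angle-fixing'' subspace $L$ are both Lagrangian for the natural symplectic (cup-product) pairing on $H^1(U_\epsilon;\E) = H^1(\partial M_\epsilon;\E)$, and $I\cap L = 0$ because any class in the intersection lifts to a class in $H^1(M,U_\epsilon;\E)$ mapping to zero in $H^1(M;\E)$, which vanishes by Proposition~\ref{prop:H1}. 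Since $I$ and $L$ are complementary Lagrangians, $dF_{[\rho]}$ restricted to $I$ is injective, hence (by the dimension count) an isomorphism, and $F$ is a local diffeomorphism at $[\rho]$.

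\medskip

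\noindent\textbf{Main obstacle.} The delicate point is making precise that $\ker dF^c$ really coincides with the ``Lagrangian'' subspace appearing in the symplectic half-lives-half-dies argument, and that its intersection with $I$ is controlled by Proposition~\ref{prop:H1} rather than just by a dimension count; one must check that the trace-coordinate functions $F^c$ and the geometric boundary pairing are compatible, i.e.\ that vanishing of $dF^c$ is equivalent to the relevant class being in the kernel of the inclusion $H^1(U_\epsilon^c;\E)\to H^1(\text{(doubled) }U_\epsilon^c;\E)$ or equivalently orthogonal to $I$. Equivalently, one has to verify that the half-dimensional image of $H^1(M;\E)$ is transverse to $\bigoplus_c\ker dF^c_{i_c^*[\rho]}$; here the cleanest route is to invoke that both are half-dimensional and that their intersection injects into $H^1(M,U_\epsilon;\E)\xrightarrow{\;0\;}H^1(M;\E)$, forcing it to be zero.
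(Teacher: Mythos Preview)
Your overall architecture is right and matches the paper's: the source and target of $F$ have the same real dimension $\sum_c n_c$; the differential of $i^*$ is the restriction map $H^1(M;\E)\to H^1(U_\epsilon;\E)$, which is injective with half-dimensional image $I$ by Proposition~\ref{prop:H1}; and what remains is to show that $I$ meets $L:=\bigoplus_c\ker dF^c_{i_c^*[\rho]}$ (the tangent space to the level set of $F$ through $i^*[\rho]$) only in $0$.

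The gap is precisely in your justification of $I\cap L=0$. Your sentence ``any class in the intersection lifts to a class in $H^1(M,U_\epsilon;\E)$ mapping to zero in $H^1(M;\E)$'' is not correct: elements of $I\cap L$ live in $H^1(U_\epsilon;\E)$, and there is no map from there back to $H^1(M,U_\epsilon;\E)$. Conversely, by exactness a class $\beta\in H^1(M;\E)$ lies in the image of $H^1(M,U_\epsilon;\E)$ if and only if its restriction to $U_\epsilon$ \emph{vanishes}, which is exactly the opposite of being a nonzero element of $I$. Knowing that $I$ and $L$ are both half-dimensional (or even both Lagrangian for the cup-product pairing) is likewise insufficient: two Lagrangians can intersect nontrivially. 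Proposition~\ref{prop:H1} only encodes Theorem~\ref{thm:rigL2} for deformations that are \emph{zero} over $U_\epsilon$; it says nothing about deformations whose restriction to $U_\epsilon$ is a nonzero element of $L$. The subspace $L$ is defined through the differentials of the trace functions and the auxiliary $f^v_k$, not via any relative cohomology group, so there is no purely topological mechanism forcing $I\cap L=0$.

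The paper closes this gap by a second, genuinely analytic application of Theorem~\ref{thm:rigL2}. It exhibits an explicit basis of $L$ by closed $\E$-valued $1$-forms on $U_\epsilon$: for each singular vertex $v$, forms $\omega^v_k$ ($1\le k\le 2d(v)-6$) obtained from compactly supported deformations of the spherical structure on the link $L_v$ (the $\partial/\partial g^v_k$ directions in Proposition~\ref{propfunctions}), extended radially over the cone neighborhood $U_v$; and for each singular edge $e$, length and twist forms $\omega^e_z=d\phi\wedge\sigma_{\partial/\partial z}$ and $\omega^e_\theta=d\phi\wedge\sigma_{\partial/\partial\theta}$. One then checks two things: that these classes are linearly independent in $H^1(U_\epsilon;\E)$ (hence span $L$ by dimension), and, crucially, that the associated infinitesimal metric deformations $h^v_k$, $h^e_z$, $h^e_\theta$ all lie in $L^2$ together with their covariant derivatives. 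Now if $[\omega]\in H^1(M;\E)$ has restriction in $L$, one may choose a representative on $M$ that agrees over $U_\epsilon$ with a linear combination of these explicit forms; the corresponding global metric deformation $h$ then satisfies the hypotheses of Theorem~\ref{thm:rigL2}, hence is trivial, and therefore $[\omega]=0$. This construction of $L^2$ representatives for a basis of $L$ is the essential idea missing from your sketch; without it, the transversality step is unsupported.
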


\begin{proof}
Let us consider the restriction map:
\begin{eqnarray*}
r : X(\pi_1(M), SL(2,\C)) & \to & \bigtimes_{c\in \mathcal{C}} X(\pi_1(\Sigma_\epsilon^c), SL(2,\C)) \\
\ [\varrho] & \mapsto & (i_c^* [\varrho])_{c \in \mathcal{C}} 
\end{eqnarray*}   
We have seen in Section \ref{sec:defotheory}  that the Zariski tangent space at $[\rho]$ (resp. $i_c^* [\rho]$) of $X(\pi_1(M),SL(2,\C))$ (resp. $X(\pi_1(\Sigma_\epsilon^c), SL(2,\C))$) is identified with the first cohomology group $H^1(M;\E)$ (resp. $H^1(\Sigma_\epsilon^c;\E)$); and since $[\rho]$ (resp. $i_c^* [\rho]$) is a smooth point, this is also the usual tangent space. The tangent map $Dr$ at $[\rho]$ is thus identified with the natural map 
$$H^1(M;\E) \to \bigoplus_{c\in \mathcal{C}} H^1(\Sigma^c_\epsilon;\E) = H^1(\Sigma_\epsilon;\E) = H^1(U_\epsilon;\E).$$
According to Proposition \ref{prop:H1}, this is an injective map, with half-dimensional image. Hence $r$ is an immersion near $[\rho]$, and we just have to prove that its image is locally transverse to the level sets of the function $F$. Let us denote by $L$ the level set of the function $F$ passing through $[\rho]$; its tangent space $T_{[\rho]}L$ is identified to a subspace of $H^1(U_\epsilon;\E)$. It is sufficient to show that the image of $Dr$ is in direct sum with $T_{[\rho]}L$; because of dimensions, it is actually enough to prove that $T_{[\rho]} L$ and the image of $Dr$, i.e.~the image of $H^1(M;\E)$ in $H^1(U_\epsilon;\E)$, have trivial intersection.

By the definition of the function $F$, the elements of $T_{[\rho]}L$ are in one-to-one correspondence with the infinitesimal deformations that preserve the holonomy classes of all the meridian curves $\mu_e$, $e\in E$, and preserve the fact that the induced holonomy on the vertices' links is spherical, i.e.~has values in a maximal compact subgroup. We have introduced in Section \ref{subsec:infrig} ``standard'' closed $\E$-valued forms on $U_\epsilon$; more precisely, for each singular edge $e$ we have defined two $1$-forms $\omega^e_\lambda = d\phi \wedge \sigma_{\d / \d z}$ and $\omega^e_\tau = d\phi \wedge \sigma_{\d / \d\theta}$. It is clear that their cohomology classes belong to $T_{[\rho]}L$ since they preserve the induced holonomy on each links and on each edge's meridian. Similarly, we have seen that any angle-preserving infinitesimal deformation of the spherical structure on the link $L_v$ of a singular vertex $v$ (which is homeomorphic to the punctured sphere $S_v$) corresponds to a closed $\E$-valued $1$-form on $U_\epsilon$, vanishing near the singular edges. The class of such a form also clearly belongs to $T_{[\rho]}L$. Using Proposition \ref{propfunctions}, we see that the space of angle-preserving infinitesimal deformation of $L_v$ has (real) dimension $2d(v)-6$. So we can find $2d(v)-6$ such forms $\omega^v_1,\ldots, \omega^v_{d(v)-6}$ whose classes are linearly independent in $H^1(U_v;\E)$, where $U_v \subset U_\epsilon$ is the regular part of a cone neighborhood of $v$. 

Because of the dimension of $L$, if the family $(\omega^v_k)_{v\in V, 1\leq k \leq 2d(v)-6} \cup (\omega^e_\lambda, \omega^e_\tau)_{e \in E}$ is linearly independent in $H^1(U_\epsilon;\E)$ then it forms a basis of $T_{[\rho]}L$. So, suppose there exists a linear combination of these forms which is a coboundary:
$$\sum_{v,k} \lambda^v_k \omega^v_k + \sum_e (\lambda^e_\lambda \omega^e_\lambda +\lambda^e_\tau \omega^e_\tau) = ds$$ for some section $s$ of $\E$. Over $U_v$, we get $\sum_k \lambda^v_k \omega^v_k = ds$ (the other forms being supported away from $U_v$), which implies $\lambda^v_k = 0$ for $1\leq 2d(v)-6$ because the forms $\omega^v_k$ are independent in $H^1(U_v;\E) \simeq T_{[i_{v}^*\rho]} X(\pi_1(S_v),SL(2,\C))$. So $ds=0$ over $U_v$, but since $i_v^* \rho$ is irreducible there exists no non-zero constant section of $\E$ over $U_v$. Therefore $s_{|U_v}=0$, and this being true for any $v$, finally $s=0$ away from the support of the family $(\omega^e_\lambda, \omega^e_\tau)_{e \in E}$.
Now on a neighborhood of a singular edge $e$, we have $ds =  \lambda^e_\lambda \omega^e_\lambda +\lambda^e_\tau \omega^e_\tau = \lambda^e_\lambda d\phi \wedge \sigma_{\d / \d z} + \lambda^e_\tau  d\phi \wedge \sigma_{\d / \d\theta}$. Integrating $ds$ along a path parallel to $e$, we obtain $0 =  \lambda^e_\lambda \sigma_{\d / \d z} + \lambda^e_\tau \sigma_{\d / \d\theta}$, thus $\lambda^e_\lambda = \lambda^e_\tau =0$. So the family  $(\omega^v_k)_{v\in V, 1\leq k \leq 2d(v)-6} \cup (\omega^e_\lambda, \omega^e_\tau)_{e \in E}$ is indeed linearly independent in $H^1(U_\epsilon;\E)$, hence a basis of $T_{[\rho]} L$.

We can now finish the proof. Let $[\omega] \in H^1(M;\E)$ be such that $Dr([\omega])$ belongs to $T_{[\rho]}L$. So $Dr([\omega])$ has a representative $\omega'$ which is a linear combination of the above forms. But since $Dr$ is just the restriction to $U_\epsilon$, this means that $[\omega]$ also has a representative $\omega$ which is over $U_\epsilon$ a linear combination of the above forms. The corresponding infinitesimal deformation $h$ of the metric tensor then satisfies the hypotheses of Theorem \ref{thm:rigL2}, so that $h$ is trivial; thus $\omega$ is a coboundary and $[\omega] = 0$. Consequently $T_{[\rho]}L$ and the image of $Dr$ are in direct sum, which implies that the image of $r$ and the level sets of the function $F$ are locally transverse.   
\end{proof}

\bigskip
Now that we have established the existence of this local coordinate system, we are in a position to prove the two main results of this article:

\begin{Thm} \label{thmStoker1}
Let $\bar M$ be a closed, orientable, hyperbolic cone-$3$-manifold with singular locus $\Sigma$ and cone angles smaller than $2\pi$. Then the tuple $(\alpha_e)_{e\in E}$ of cone angles gives a local parametrization of the space of closed hyperbolic cone-$3$ manifolds with singular locus $\Sigma$ in a neighborhood of $\bar M$.
\end{Thm}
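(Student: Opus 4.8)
The plan is to restrict the coordinate chart $F$ of $X(\pi_1(M),SL(2,\C))$ constructed in the preceding theorem to the locus of representations coming from cone-manifold structures with singular locus $\Sigma$, and then to read off the cone angles among the surviving coordinates. First I would identify this locus: the holonomy $\varrho$ of any hyperbolic cone-manifold structure on $\bar M$ with singular locus $\Sigma$ satisfies, for every singular edge $e$, that $\varrho(\mu_e)$ is a non-trivial elliptic isometry, so $\tr_{\mu_e}(\varrho)$ is real and lies in $(-2,2)$; and, for every vertex $v$, that $i_v^*\varrho$ is the holonomy of a spherical cone-surface structure on the link $\bar L_v$, hence is conjugate into $SU(2,\C)$, which by Proposition \ref{propfunctions} amounts exactly to the vanishing of $\Im\tr_{\mu^v_k}(i_v^*\varrho)$ and of $f^v_k(i_v^*\varrho)$ for all $k$. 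Since each $i_v(\mu^v_k)$ is freely homotopic to one of the $\mu_e$, the conditions $\Im\tr_{\mu^v_k}=0$ are subsumed in the conditions $\Im\tr_{\mu_e}=0$, so all such holonomies lie in
$$\mathcal{L}_\Sigma=\big\{[\varrho]\ :\ \Im\tr_{\mu_e}([\varrho])=0,\ \Re\tr_{\mu_e}([\varrho])\in(-2,2)\ \text{for all }e\in E,\quad f^v_k([\varrho])=0\ \text{for all }v\in V,\ k\big\}.$$
Because $\Im\tr_{\mu_e}$ and the $f^v_k$ are among the coordinate functions of $F$ (cf.\ Theorems \ref{thm:F} and \ref{thm:Ftorus}), $\mathcal{L}_\Sigma$ is, in these coordinates, the intersection of an open set with a coordinate subspace, hence a smooth submanifold near $[\rho]$. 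A dimension count (recalling that $F$ is a chart and that $g_c=|E_c|-|V_c|+1$) shows that $\dim\mathcal{L}_\Sigma=|E|$ and that the coordinates of $F$ not set to zero are precisely the $|E|$ functions $(\Re\tr_{\mu_e})_{e\in E}$, which therefore restrict to a coordinate chart on $\mathcal{L}_\Sigma$ near $[\rho]$.

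Next I would replace these coordinates by the cone angles. A non-trivial elliptic isometry with rotation angle $\alpha_e$ lifts to $SL(2,\C)$ with trace $\pm 2\cos(\alpha_e/2)$, the sign being locally constant on the representation variety and hence fixed near $[\rho]$; thus $\Re\tr_{\mu_e}=\varepsilon_e\,2\cos(\alpha_e/2)$ with $\varepsilon_e\in\{\pm1\}$. Since $\alpha_e\in(0,2\pi)$, the map $\alpha_e\mapsto 2\cos(\alpha_e/2)$ is a diffeomorphism of $(0,2\pi)$ onto $(-2,2)$ (its derivative $-\sin(\alpha_e/2)$ never vanishes), so $(\alpha_e)_{e\in E}$ differs from $(\Re\tr_{\mu_e})_{e\in E}$ by a diffeomorphism between open subsets of $\R^{|E|}$. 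Hence $(\alpha_e)_{e\in E}$ is likewise a coordinate chart on $\mathcal{L}_\Sigma$ near $[\rho]$.

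The remaining, and central, task is to identify, via the holonomy map, the space of cone-manifold structures on $\bar M$ with singular locus $\Sigma$ near $\bar M$ with a neighborhood of $[\rho]$ in $\mathcal{L}_\Sigma$. As $\bar M$ is a closed hyperbolic cone-manifold, $\rho$ is irreducible (Corollary \ref{cor:rhoirred} when $\Sigma$ has a vertex of valence $\geq 3$, and classically otherwise), so Theorem \ref{thm:Goldman} applies: passing to regular parts, a nearby such structure is determined up to isotopy by the conjugacy class of its holonomy, and this assignment is a local homeomorphism onto an open subset of $X(\pi_1(M),SL(2,\C))$, which by the first step lies in $\mathcal{L}_\Sigma$. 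For the converse I would show that every $[\varrho]\in\mathcal{L}_\Sigma$ sufficiently close to $[\rho]$ is realized by a genuine cone-manifold structure on $\bar M$ with singular locus $\Sigma$, by an Ehresmann--Thurston type argument near $\Sigma$: the hyperbolic structure on $M$ with holonomy $\varrho$ has, near each singular edge, an end whose developing map is $C^0$-close to the model tube and whose meridian holonomy is elliptic, so it completes by inserting a singular geodesic with the corresponding cone angle; and near each vertex $v$, the condition $f^v_k=0$ puts $i_v^*\varrho$ into a conjugate of $SU(2,\C)$, so it is the holonomy of a spherical cone-surface structure $\bar L'_v$ on the link with the same $d(v)$ cone points, and the structure near $v$, being close to the cone over $\bar L_v$ and carrying its holonomy, is isotopic to the cone over $\bar L'_v$ --- in particular $v$ does not split and the singular locus stays $\Sigma$. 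Continuity of this completion yields the desired local homeomorphism onto $\mathcal{L}_\Sigma$; composing with the chart $(\alpha_e)_{e\in E}$ finishes the proof.

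The genuinely delicate point is this last completion over the (possibly non-trivalent) vertices: one must know that a nearby $SU(2,\C)$-valued representation of $\pi_1(L_v)$ is realized by a spherical cone-surface structure on the link, and that this $2$-dimensional datum controls the $3$-dimensional structure near $v$. This is exactly where the analysis of surface-group representations carried out above (Proposition \ref{propfunctions}, Theorem \ref{thm:F}) takes over the role played by the trivalency hypothesis in \cite{HK} and \cite{Weiss2}.
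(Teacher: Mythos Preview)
Your proposal follows essentially the same route as the paper's own proof: use irreducibility of $\rho$ together with Theorem~\ref{thm:Goldman} to identify nearby hyperbolic structures with a neighborhood of $[\rho]$ in $X(\pi_1(M),SL(2,\C))$, cut out the cone-manifold locus by the conditions $\Im\tr_{\mu_e}=0$ and $f^v_k=0$ (via Proposition~\ref{propfunctions}), read off $(\Re\tr_{\mu_e})_{e\in E}$ as coordinates, and convert to cone angles through $\pm 2\cos(\alpha_e/2)$. The paper simply \emph{asserts} the identification ``cone-manifold structures with singular locus $\Sigma$ $\leftrightarrow$ vertex-link holonomies conjugate into $SU(2,\C)$ and edge holonomies elliptic'' in one sentence, whereas you try to justify the harder (converse) direction by an Ehresmann--Thurston completion argument near the singular strata; this is a genuine elaboration of a step the paper leaves implicit, and your flagging of the vertex case as the delicate point is accurate.
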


\begin{proof} As usual, let $\rho$ be the holonomy representation of $M$. We know that $\rho$ is irreducible (Corollary \ref{cor:rhoirred}), so according to Theorem \ref{thm:Goldman} (from \cite{Goldman4}), the space of (equivalence classes of) hyperbolic structures on the regular part $M$ of $\bar M$ in a neighborhood of the given cone-manifold structure is homeomorphic to a neighborhood of $[\rho]$ in $X(\pi_1(M),SL(2,\C))$. By the preceding corollary, it admits as local coordinates the functions $(\Re \tr_{\mu_e}, \Im \tr_{\mu_e})_{e\in E}$ and $(\Im f^v_k)_{v\in V, 1\leq k \leq 2d(v)-6}$. But most of these neighboring hyperbolic structures are not cone-manifolds ones. 
Locally, the set of cone-manifold structures with singular locus $\Sigma$ on $M$ corresponds to the set of representations such that the holonomy of any vertex's link is contained in a maximal compact subgroup of $SL(2,\C)$, i.e.~is contained in $SU(2)$ up to conjugacy, and such that the holonomies of the edges are elliptic isometries.
As we have seen in Proposition \ref{propfunctions}, this is equivalent to the conditions $\Im \tr_{\mu_e} = 0$, $e \in E$ and $f^v_k = 0$, $v \in V, 1\leq k \leq 2d(v)-6$. Therefore the space of cone-manifold structures with singular locus $\Sigma$ is locally parametrized by the tuple of the (real) traces of the singular edges' holonomy. Now if $\alpha$ is the cone angle of an edge, then the corresponding trace is equal to $\pm 2 \cos (\alpha/2)$;  thus the cone angle tuple yields another valid parametrization. 
\end{proof}

Remark that if $\Sigma$ has some vertices of valency greater than $3$, then $M$ most certainly admits other nearby cone-manifold structures than those mentioned above, but their singular loci will be different from $\Sigma$. As discussed in the introduction, this is because some vertices may ``split'' into several lower valency vertices, see Fig.~\ref{fig1}.

\bigskip

We need some more definitions before turning our attention to the proof of the Stoker problem. We will say that two convex polyhedra $\mathcal{P}_1$ and $\mathcal{P}_2$ have the same combinatorial type if there exists an oriented homeomorphism $\mathcal{P}_1 \to \mathcal{P}_2$ which sends faces to faces, edges to edges and vertices to vertices. Equivalently, $\mathcal{P}_1$ and $\mathcal{P}_2$ have the same combinatorial type if there exists a bijection $f$ between the sets of vertices of $\mathcal{P}_1$ and $\mathcal{P}_2$, such that two vertices bound an edge in $\mathcal{P}_1$ if and only if their images bound an edge in $\mathcal{P}_2$, and a family of vertices of $\mathcal{P}_1$ is the set of vertices of a face if and only if its image is the set of vertices of a face of $\mathcal{P}_2$. Such a map will be called a marking of $\mathcal{P}_2$ by $\mathcal{P}_1$. 

Now let us fix a convex polyhedron $\mathcal P$. A marked polyhedron having the combinatorial type of $\mathcal P$ is a couple $(\mathcal{Q},f)$ where $f$ is a marking of $\mathcal{Q}$ by $\mathcal{P}$. We define $\overline{Pol}(\mathcal P)$ as the set of convex marked polyhedra having the combinatorial type of $\mathcal P$. The direct isometry group of the ambient space acts freely on $\overline{Pol}(\mathcal P)$; the quotient is denoted by $Pol(\mathcal P)$. The reason for introducing marked polyhedra is that the space of congruence classes of convex polyhedra with the combinatorial type of $\mathcal P$ is generally an orbifold, whereas its ramified cover $Pol(\mathcal P)$ is a smooth manifold.

\begin{Prop}\label{prop:manpol}
Let $\mathcal P$ be a closed (strictly) convex polyhedron in $\H^3$, having $|E|$ edges. Then $Pol(\mathcal P)$ is a manifold of dimension $|E|$.
\end{Prop}

This fact seems to be well-known but oddly enough we could not find a correct reference for it, even in the case of Euclidean polyhedra; most textbooks restrict to the easy cases where all faces are triangle or all vertices are trivalent. Since this is not the main goal of this article, the following proof can be skipped by the reader. Note however that a simple calculation yields $|E|$ as the correct dimension. Let us denote by $F$ (resp. $E$ and $V$) the set of faces (resp. edges and vertices) of $\mathcal P$. Each face has three degrees of freedom, and each vertex $v$ gives $d(v)-3$ conditions because its $d(v)$ incident faces must meet at $v$. So heuristically, the dimension of $\overline{Pol}(\mathcal P)$ is $3|F| - \sum_{v\in V} (d(v)-3) = 3|F| -  \sum_{v\in V} d(v) + 3|V| $. But $d(v)$ is also the number of edges incident at $v$, and since each edge is incident to exactly two vertices, $\sum_{v \in V} d(v) = 2|E|$. So we obtain that the dimension of $\overline{Pol}(\mathcal P)$ is $3|F|+3|V|-2|E|$, which is equal to $|E|+6$ since the Euler characteristic of a convex polyhedron is $2$ (i.e.~$|F|+|V|-|E| = 2$). Finally, since the action of $\Isom^+(\H^3)$ on $\overline{Pol}(\mathcal P)$ is free and proper, the quotient $Pol(\mathcal P)$ has dimension $\dim \overline{Pol}(\mathcal P) - \dim \Isom^+(\H^3) = |E|$. The goal of the proof below is to show that this computation is indeed correct. 

\begin{proof}
We will begin by proving this result for convex polyhedra in Euclidean space. We recall that a convex polyhedron can be defined as a (bounded) finite intersection of half-spaces. In particular, a face can be identified with its supporting half-space. The set of all half-spaces is a three-dimensional manifold,  which we will realize as the $3$-cylinder $C^3 = \mathbb S^2 \times \R \subset \R^4$ (a half-space of equation $ax+by+cz+d \geq 0$ corresponds to the point $(a/n,b/n,c/n,d/n)$ where $n=\sqrt{a^2+b^2+c^2}$). With this embedding, it is clear that the intersection of the boundary planes of four half-spaces $p_1,\dots,p_4$ is non-empty if and only if $\det (p_1,p_2,p_3,p_4) = 0$. It is also clear that a point $x\in \R^3$ lies on the boundary plane of a half-space $p$ if and only if $p\cdot (x,1) = 0$ (for the usual dot product on $\R^4$). Let us define on $\R^4$ the cross-product of $3$ vectors by asking that $u\wedge v \wedge w$ is the unique vector such that $x \cdot (u\wedge v \wedge w) = \det (x,u,v,w)$ for all $x \in \R^4$. The previous remarks imply that if the boundary planes of three half-spaces $p_1, p_2, p_3$ meet in a single point $v \in \R^3$, then $p_1 \wedge p_2 \wedge p_3 = (c\,v, c)$ for some $c \in \R^*$.

For each vertex $v$ of the polyhedron $\mathcal P$, let $\mathrm{f}^v_1,\mathrm{f}^v_2,\dots,\mathrm{f}^v_{d(v)}$ be the faces incident to $v$; the order in which we consider them will be discussed later. An element of $\overline{Pol}(\mathcal P)$, that is, a convex Euclidean marked polyhedron having the combinatorial type of $\mathcal P$, is then a collection $\left(p(\mathrm f)\right)_{\mathrm f \in F} \in (C^3)^{|F|}$ of distinct half-spaces satisfying the following conditions: 
\begin{enumerate}
\item (incidence) for each vertex $v\in V$, the boundary planes of $p(\mathrm{f}^v_1), \dots, p(\mathrm{f}^v_{d(v)})$ intersect in a single point;
\item (minimality) the intersection is non-redundant.
\end{enumerate}
Let $\mathcal P_0 = (p(\mathrm f))_{\mathrm f \in F}$ be an element of $\overline{Pol}(\mathcal P)$. The properties that an intersection of half-spaces is non-redundant and that three planes meet at a single point are clearly open, so that in a neighborhood of $\mathcal P_0$, the two conditions can be reformulated as: for each vertex $v\in V$ such that $d(v) \geq 4$, for each $i \in \{1, \dots, d(v)-3\}$, the boundary planes of $p(\mathrm{f}^v_i), p(\mathrm{f}^v_{i+1}), p(\mathrm{f}^v_{i+2})$ and $p(\mathrm{f}^v_{i+3})$ intersect in a single point, i.e.~$\det(p(\mathrm{f}^v_i), p(\mathrm{f}^v_{i+1}), p(\mathrm{f}^v_{i+2}), p(\mathrm{f}^v_{i+3}))=0$. It remains now to check that the functions $\psi_{v,i} = \det(p(\mathrm{f}^v_i), p(\mathrm{f}^v_{i+1)}, p(\mathrm{f}^v_{i+2}), p(\mathrm{f}^v_{i+3}))$ defined on $(C^3)^{|F|}$ have linearly independent derivatives at $\mathcal P_0$. Obviously, this is true if there is no vertex of valency greater than $3$; for the general case we will need the following combinatorial result.

\begin{Lem}\label{lem:comb}
Let $G$ be a connected planar graph with no mono- or bivalent vertex. Let $V'$ be a subset of vertices such that the valency of every element of $V'$ is greater or equal to $4$. If $V'$ is not empty, then there exists a face having at least $1$ and no more than $3$ vertices belonging to $V'$.
\end{Lem}

\begin{proof}
As before, we will denote by $V$, $E$ and $F$ the sets of vertices, edges and faces of $G$; it satisfies the Euler formula $|V|-|E|+|F|=2$. Let us first assume that $V''=V\setminus V'$ is exactly the set of trivalent vertices, and that every face has at least one vertex in $V'$. The valency of a vertex $v$ is still denoted by $d(v)$, and the number of vertices (or equivalently of edges) of a face $\mathrm f$ is denoted by $d(\mathrm f)$. Then $2|E| = \sum_{v\in V} d(v) \geq 4 |V'| + 3 |V''| $. By contradiction, if all faces have at least $4$ vertices in $V'$, then we also have $2|E| = \sum_{\mathrm f\in F}  d(\mathrm f) \geq  4 |F| + 3 |V''|$, where the second term is the contribution of the trivalent vertices. So $4|E| \geq 4 |V'| + 6 |V''| + 4 |F| \geq 4 |V| + 4|F|$, which contradicts Euler formula. Now, if $V'$ is not empty, we can always reduce the general case to the previous one using simple graph moves, see Fig.~\ref{fig:moves}: firstly, every face whose vertices all belong to $V''$ is collapsed to a single vertex; secondly, every vertex $v$ in $V''=V\setminus V'$ of valency $d(v)>3$ is split into $d(v)-2$ trivalent vertices.

\begin{figure}[h]
\centering
\includegraphics{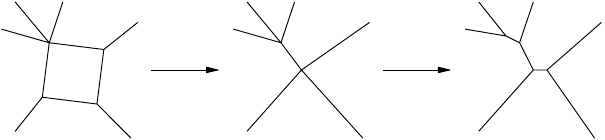}
\caption{Shrinking of a face and splitting of vertices}\label{fig:moves}
\end{figure}
\end{proof}

Let $V'$ be the set of vertices of $\mathcal P$ of valency greater than $3$, and suppose that $V'$ is not empty (otherwise there is nothing to prove). Suppose now that there exists a linear dependency $\sum_{v\in V'} \sum_{i=1}^{d(v)-3} \lambda_{v,i}\,d\psi_{v,i} = 0$ on the cotangent space of $(C^3)^{|F|}$ at $\mathcal P_0$. According to the previous lemma applied to the graph of $\mathcal P$, there exists a face $\mathrm f$ with at least $1$ and at most $3$ vertices in $V'$. Without loss of generality we can assume that $\mathrm f = \mathrm f^v_1$ for each of its vertices $v$, since the order of the faces at each vertex is arbitrary. Let $v_1$, $v_2$ and $v_3$ be its vertices belonging to $V'$ (the proof is similar if $\mathrm f$ has only one or two vertices in $V'$). If we evaluate the above dependency relation on a first-order deformation $(0,\dots,0,\dot p_\mathrm f,0,\dots,0)$ that only affects $p(\mathrm f)$, we obtain that 
\begin{multline*}
0  = \sum_{i=1}^3 \lambda_{v_i,1} d\psi_{v_i,1}(0,\dots,0,\dot p_\mathrm f,0,\dots,0) =  \sum_{i=1}^3 \lambda_{v_i,1} \det(\dot p_\mathrm f,p(\mathrm f^{v_i}_2),p(\mathrm f^{v_i}_3),p(\mathrm f^{v_i}_4)) 
\\ =  \left( \sum_{i=1}^3 \lambda_{v_i,1}\,p(\mathrm f^{v_i}_2) \wedge p(\mathrm f^{v_i}_3) \wedge p(\mathrm f^{v_i}_4) \right) \cdot \dot p_\mathrm f.
\end{multline*}
This equality holds for all $\dot p_\mathrm f \in T_{p(\mathrm f)} C^3$, but it is also satisfied by $p(\mathrm f)$, so that actually  $\sum_{i=1}^3 \lambda_{v_i,1}\,p(\mathrm f^{v_i}_2) \wedge p(\mathrm f^{v_i}_3) \wedge p(\mathrm f^{v_i}_4) =0$. But we have seen that $p(\mathrm f^{v_i}_2) \wedge p(\mathrm f^{v_i}_3) \wedge p(\mathrm f^{v_i}_4)$ is a multiple of $(x_i,1)$, where $x_i\in \R^3$ is the position of the vertex $v_i$ in $\mathcal P_0$. Since the three vertices are not aligned in $\mathcal P_0$, the family $\{(x_1,1),(x_2,1),(x_3,1)\}$ is linearly independent, and thus $\lambda_{v_1,1}=\lambda_{v_2,1}=\lambda_{v_3,1}=0$. 

For the next step, we apply Lemma \ref{lem:comb} to the graph obtained by shrinking the face $\mathrm f$ (as in Fig.~\ref{fig:moves}), and to the set of vertices that still have valency greater than $4$, except possibly the new vertex corresponding to $\mathrm f$. This gives us a second face $\mathrm f' \in F$, and we can assume that it is equal to $\mathrm f^v_1$ or $\mathrm f^v_2$ for each of its vertices. Applying a first-order deformation that only affects $p(\mathrm f')$ to the linear dependency will yield as above the vanishing of (up to three) other coefficients, and we can continue this process to show that the family $(d\psi_{v,i})$ is indeed linearly independent. This ends the proof that in the Euclidean case, $\overline{Pol}(\mathcal P)$ is a manifold of dimension $|E|+6$.

Now let us go back to $\H^3$. We will use the projective (or Klein) model to embed $\H^3$ as the open unit ball of $\R^3$; this embedding is not conformal, but maps geodesic lines and planes of $\H^3$ to (portions of) straight lines and planes. In particular, it identifies convex hyperbolic polyhedra with convex Euclidean polyhedra contained in the open unit ball, and of course it preserves the combinatorial type. This shows that the above conclusion is also valid for convex hyperbolic polyhedra. 
\end{proof}

\bigskip

\begin{Thm} \label{thmStoker2}
Let $\mathcal P$ be a closed (strictly) convex polyhedron in $\H^3$, having $N$ edges. Then the tuple $(\alpha_1,\ldots,\alpha_N)$ of dihedral angles gives a local parametrization of $Pol(\mathcal P)$.
\end{Thm}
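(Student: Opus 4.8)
The plan is to deduce Theorem~\ref{thmStoker2} from Theorem~\ref{thmStoker1} via the double construction. First I would associate to each marked convex polyhedron $(\mathcal{Q},f) \in \overline{Pol}(\mathcal P)$ its double $D\mathcal{Q}$, obtained by gluing $\mathcal{Q}$ to its mirror image along matching faces. As recalled in section~\ref{subsec:hypconeman}, $D\mathcal{Q}$ is a closed, orientable hyperbolic $3$-cone-manifold whose singular locus $\Sigma$ corresponds to the edges and vertices of $\mathcal{Q}$ (the combinatorial type being fixed, $\Sigma$ is a fixed graph), and whose cone angles are exactly twice the dihedral angles of $\mathcal{Q}$; convexity guarantees these are less than $2\pi$. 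Since the markings are all by $\mathcal P$ and two isometric polyhedra have isometric doubles, this descends to a well-defined map $\mathcal{D} : Pol(\mathcal P) \to \{\text{hyperbolic cone-manifold structures on }D\mathcal P\text{ with singular locus }\Sigma\}$.

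The key step is to check that $\mathcal{D}$ is a local homeomorphism onto its image, or at least an open map onto the locus of cone-manifolds admitting an isometric involution. One direction is easy: a continuous family of marked convex polyhedra yields a continuous family of doubles, so $\mathcal{D}$ is continuous; and nearby marked polyhedra with the same combinatorial type and the same dihedral angles have isometric doubles, from which (by reversing the doubling, cutting along the fixed-point surface of the mirror involution) one recovers that the polyhedra themselves are isometric. The slightly delicate point is that the doubling map need not be literally injective at the level of structures if a polyhedron has self-symmetries, but this is exactly why one passes to marked polyhedra $Pol(\mathcal P)$: the marking rigidifies the construction, and the involution interchanging the two copies is canonically determined, so $\mathcal{D}$ is injective near any given $(\mathcal{Q},f)$.

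With this in place, I would invoke Theorem~\ref{thmStoker1}: the space of hyperbolic cone-manifold structures on $D\mathcal P$ with singular locus $\Sigma$, near the double of $\mathcal P$, is locally parameterized by the tuple of cone angles $(\alpha_e)_{e\in E}$. Restricting to the slice $\mathcal{D}(Pol(\mathcal P))$ of doubles, the cone angles are $(2\beta_e)_{e\in E}$ where $(\beta_e)$ are the dihedral angles of the polyhedron; so the composition $(\beta_e) \mapsto D\mathcal{Q} \mapsto (2\beta_e)$ shows that the dihedral angle tuple separates nearby marked polyhedra. To promote this to a genuine parameterization one checks dimensions: by the preceding Lemma, $\dim Pol(\mathcal P) = |E|$, which matches the number of dihedral angles, so an injective continuous map between manifolds of the same dimension that is locally given by reading off angles — combined with invariance of domain — is a local homeomorphism, whence the dihedral angle map is a local parameterization of $Pol(\mathcal P)$.

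The main obstacle I anticipate is the smoothness/openness of the doubling correspondence at vertices, precisely the ``splitting'' phenomenon flagged in the introduction: a priori the double of a deformed polyhedron could be a cone-manifold in which a high-valence vertex has split, so that its singular locus is no longer $\Sigma$. One must argue that deformations staying within the class of \emph{convex} marked polyhedra of the fixed combinatorial type produce doubles whose singular locus is still exactly $\Sigma$ (no splitting, because the vertex combinatorics of the polyhedron is frozen), so that Theorem~\ref{thmStoker1} applies on the nose. Granting that — which is immediate from the definition of ``same combinatorial type'' — the rest is the dimension count and invariance of domain, and the proof is short.
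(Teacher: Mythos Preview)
Your proposal is correct and follows essentially the same argument as the paper: the double construction gives an injective map $Pol(\mathcal P)\to C(M,\Sigma)$, Theorem~\ref{thmStoker1} and the preceding Lemma show both sides are $|E|$-dimensional manifolds, and invariance of domain (the paper says ``local diffeomorphism'') lets you pull back the cone-angle parameterization to a dihedral-angle parameterization. Your discussion of the splitting issue is exactly the paper's implicit reasoning, since polyhedra of fixed combinatorial type automatically have doubles with the fixed singular locus $\Sigma$.
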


\begin{proof}
Let us denote by $\bar M = D(\mathcal P)$ the double of $\mathcal P$; this is a closed hyperbolic cone-$3$-manifold, whose cone angles are smaller than $2\pi$ because $\mathcal P$ is convex, and its singular locus $\Sigma$ corresponds to the graph formed by the edges and the vertices of $\mathcal P$. Let $C(M,\Sigma)$ be the space of cone-manifold structures with singular locus $\Sigma$ on the regular part $M$ of $D(\mathcal P)$. The double construction yields an injective map 
\begin{equation} Pol(\mathcal P) \to C(M,\Sigma) \label{eqdouble} \end{equation}
Now according to Theorem \ref{thmStoker1} the space $C(M,\Sigma)$ is near $D(\mathcal P)$ a manifold whose dimension is equal to the number of its singular edges. Hence \eqref{eqdouble} is near $\mathcal P$ an injective map between two manifolds of the same dimension, and thus a local diffeomorphism. The local parametrization of $C(M,\Sigma)$ by the cone angles therefore yields a parametrization of $Pol(\mathcal P)$ by the dihedral angles.
\end{proof}

\bigskip
\noindent {\bf Acknowledgments.}
The author would like to thank Rafe Mazzeo, Steve Kerckhoff and Olivier Guichard for their help and advices during the elaboration of this article.

\end{document}